\newtheorem{theorem}{Theorem}[section]
\newtheorem{definition}[theorem]{Definition}
\newtheorem{lemma}[theorem]{Lemma}
\newtheorem{remark}[theorem]{Remark}
\newtheorem{proposition}[theorem]{Proposition}
\newcommand{\naturals}{\mathbb{N}}
\newcommand{\real}{\mathbb{R}}
\newcommand{\realnonneg}{\real_+}
\newcommand{\until}[1]{[#1]}
\newcommand{\map}[3]{#1:#2 \rightarrow #3}
\newcommand{\ones}{\mathbbm{1}}
\newcommand{\Lie}{\mathcal L}
\newcommand{\Pc}{\mathcal P}
\newcommand{\deriv}{{\operatorname{d}}}
\newcommand{\func}{{\operatorname{f}}}
\newcommand{\barr}{{\operatorname{p}}}
\newcommand{\dyn}{{\operatorname{dyn}}}
\newcommand{\des}{{\operatorname{des}}}
\newcommand{\diag}{{\operatorname{diag}}}
\newcommand{\off}{{\operatorname{off}}}
\newcommand{\longthmtitle}[1]{\mbox{}{\textit{(#1):}}}
\newcommand{\setdef}[2]{\{#1 \; | \; #2\}}
\newcommand{\setdefb}[2]{\big\{#1 \; | \; #2 \big\}}
\newcommand{\setdefB}[2]{\Big\{#1 \; | \; #2\Big\}}
\renewcommand{\SS}{\mathcal{S}}
\newcommand{\NN}{\mathcal{N}}
\newcommand{\Wc}{\mathcal{W}}
\newcommand{\pbb}{performance-barrier-based }
\newcommand{\PBB}{Performance-Barrier-Based }
\title{\LARGE \bf \PBB Event-Triggered Control
  \\
  with Applications to Network Systems
  \thanks{A preliminary version of this
    paper appeared at the IEEE Conference on Decision and Control
    as~\cite{PO-JC:18-cdc}. This work was supported by NSF Award
    ECCS-1917177.}}
\author{Pio Ong and Jorge Cort\'es
  \thanks{P. Ong and J. Cort\'es are with Department of Mechanical and
    Aerospace Engineering, UC San Diego, {\tt\small
      \{piong,cortes\}@eng.ucsd.edu}}%
}
\begin{document}

\maketitle

\begin{abstract}
  This paper proposes a novel framework for resource-aware control
  design termed \pbb triggering. Given a feedback policy,
  along with a Lyapunov function certificate that guarantees its
  correctness, we examine the problem of designing its digital
  implementation through event-triggered control while ensuring a
  prescribed performance is met and triggers occur as sparingly as
  possible.  Our methodology takes into account the \emph{performance
    residual}, i.e., how well the system is doing in regards to the
  prescribed performance.  Inspired by the notion of control barrier
  function, the trigger design allows the certificate to deviate from
  monotonically decreasing, with leeway specified as an increasing
  function of the performance residual, resulting in greater
  flexibility in prescribing update times. We study different types of
  performance specifications, with particular attention to quantifying
  the benefits of the proposed approach in the exponential case. We
  build on this to design intrinsically Zeno-free distributed triggers
  for network systems.  A comparison of event-triggered approaches in
  a vehicle platooning problem shows how the proposed design meets the
  prescribed performance with a significantly lower number of
  controller updates.
\end{abstract}

\section{Introduction}

Trading computation and decision making for less actuator, sensing, or
communication effort offers great promises for the autonomous operation
of both individual and interconnected cyberphysical systems.  The
advent of increasingly capable devices operating in complex scenarios
raises the importance of using the available resources efficiently in
order to meet task specifications, prolong battery life, and provide
algorithmic solutions that can scale up.  Resource-aware control
examines the tight coupling between physical and cyber processes to
prescribe, in a principled way, when to use the available resources
while still guaranteeing a desired quality of service.  Motivated by
these observations, this paper develops an event-triggered control
framework that, given a prescribed performance specification,
incorporates in the decision making criteria the performance residual
to provide design flexibility for general nonlinear systems.

\subsubsection*{Literature Review} 
The event-triggered
framework~\cite{PT:07,WPMHH-KHJ-PT:12,LH-CF-HO-AS-EF-JPR-SIN:17} seeks
to determine criterions to employ opportunistically the available
control resources (e.g., actuation, sensing, communication) in order
to produce efficient implementations on digital systems. Such
criterions, called triggers, are commonly obtained by examining the
evolution under aperiodic sample-and-hold executions of the Lyapunov
certificates valid for their continuous-time counterparts. This can be
done in a derivative-based fashion, i.e., by monitoring the time
derivative of the certificate, see
e.g.,~\cite{PT:07,MA-RP-JD-DN:16,RP-PT-DN-AA:15,WPMHH-MCFD-ART:11,BAK-DJA-WPMHH:17},
or in a function-based fashion, i.e., by directly monitoring the value
of the certificate, see
e.g.,~\cite{MV-PM-EB:09,SD-NM-JFG:11,MMJ-AA-PT:09b}.  Both approaches
are widely applicable. However, derivative-based approaches tend to be
conservative because they are evaluated at the current system state
without taking into account how much the certificate has decreased
since the last update.  This is tackled in dynamic
event-triggering~\cite{AG:15} by introducing an extra variable to
store an estimate of this decrease and incorporate it into the trigger
design.  On the other hand, function-based designs suffer from lack of
robustness to disturbances in the value of the certificate.  The
work~\cite{AS-CP:11} uses both frameworks to mitigate these drawbacks
by estimating how much the certificate will decrease after each
trigger, which constitutes another source of conservatism, together
with its reliance on time triggering.  Here, we take a different
approach to combine the derivative- and function-based design
methodologies inspired by the concept of control barrier functions,
and particularly, Nagumo's Theorem, see
e.g.,~\cite{PW-FA:07,ADA-XX-JWG-PT:16,
  FB-SM:07,ADA-SC-ME-GN-KS-PT:19}.  The basic insight is to
incorporate into the trigger design the performance residual, i.e.,
how well the system is doing in regards to a prescribed performance
specification. This specification plays the role of the ``barrier''
that the system should not exceed.  This makes it possible to allow
the certificate to deviate from monotonically decreasing at all times,
with the amount of deviation allowed specified as a function of the
size of the performance residual.  Interestingly, the dynamic
event-triggered approach mentioned above can be naturally interpreted
within the framework proposed here.

Our technical approach also builds on the literature of
event-triggered approaches applied to the distributed control of
network systems, see
e.g.,~\cite{PW-MDL:09,MMJ-PT:10,PT-NC:14,DPB-VSD-WPMHH:16,DVD-EF-KHJ:12,CN-EG-JC:19-auto,JB-CN:21}
and references therein. One known issue in this context is that Zeno
behavior may arise as a result of the partial availability of
information to individual agents, despite it being ruled out for its
centralized counterpart. In such scenarios, it is common to use time
regularization~\cite{MMJ-PT:10,PT-NC:14,DPB-VSD-WPMHH:16}, i.e.,
preventing by design any update before certain fixed time (usually the
minimum inter-event time from the centralized design) has
elapsed. This requires an offline computation and the resulting
executions may behave like periodic time-triggered ones. An
alternative way of avoiding Zeno behavior is to allow for the
violation of the monotonic decrease of the certificate at all times,
see e.g.,~\cite{MCFD-WPMH:12,MG-DL-JSM-SD-KHJ:12}, at the cost of only
achieving practical stability.  Other works avoid Zeno behavior by
either requiring stronger system assumptions on the type of
certificates~\cite{XW-MDL:11,EG-PJA:12} or their solutions are
problem-specific~\cite{THC-ZK-JMS-WED:14,JB-CN:21}. Here, we combine
the performance-barrier-based framework with dynamic average
consensus~\cite{SSK-BVS-JC-RAF-KML-SM:19} to synthesize a Zeno-free
distributed design that ensure asymptotic convergence for a general
class of nonlinear systems.

\subsubsection*{Statement of Contributions}
This paper considers closed-loop continuous-time systems evolving
under a robustly stabilizing feedback endowed with a certificate in
the form of an ISS-Lyapunov function.  We address the problem of
developing a digital feedback implementation that simultaneously
retains the stability properties, opportunistically updates the
controller, and meets a prescribed performance.  The contributions of
the paper are threefold.  The first contribution is the synthesis of a
novel framework for event-triggered control termed
\pbb design.  We combine derivative- and
function-based designs by incorporating into the trigger criterion
both the time derivative and the value of the certificate.  The
flexibility of the proposed approach stems from allowing the
certificate to deviate from having to monotonically decrease at all
times. In our design, a larger performance residual, measured as the
difference between the prescribed performance and the value of the
certificate, results in a larger amount potential deviation
allowed. By construction, at any given state, the
performance-barrier-based design enjoys a longer inter-event time than
the derivative-based approach, while still achieving the prescribed
performance.  Our second contribution is the characterization of the
implementability and asymptotic stability properties of nonlinear
systems under the proposed framework.  We introduce the concept of
class-$\mathcal K$ performance specification function and establish,
for general nonlinear systems, a uniform lower bound in the
inter-event times of the proposed design, thereby ruling out the
possibility of Zeno behavior. For the particular case of exponential
performance specifications, which includes the case of linear control
systems, we provide an explicit expression of an improved minimum
inter-event time with respect to the derivative-based approach.  Our
third contribution builds on this characterization to develop
distributed triggers for network systems using the \pbb approach that
ensure asymptotic correctness.  Our distributed design makes use of
dynamic average consensus to estimate, with some tracking error, the
terms in the trigger criterion that require global information to be
evaluated.  The guarantees on the design then rely on its ability
to tolerate the tracking errors.  This is where we leverage the
flexibility provided by the \pbb approach to rule out Zeno behavior in
the network executions without using any time regularization.  We
conclude the paper by illustrating the effectiveness of the proposed
framework in a vehicle platooning problem.
 
\bigskip\bigskip

\section{Preliminaries}\label{sec:prelims}
This section presents basic preliminaries on graph theory
and dynamic average consensus\footnote{Throughout the paper, we use
  the following notation.  We denote by $\naturals$, $\real$ and
  $\realnonneg$, the set of natural, real and nonnegative real
  numbers, respectively. We let $\ones$ denote the vector with all its
  entries equal to one.  For $n \in \naturals$, we use $\until{n}$ to
  denote $\{1,\dots,n\}$. Given $x\in \real^n$, $\|x\|$ denotes its
  Euclidean norm. 
   We denote by
  $\mathbf{I} \in \real^{n \times n}$ the identity matrix.  A function
  $\map{f}{\real^n}{\real^n}$ is locally Lipschitz if, for every
  compact set $\SS_0\subset \mathbb R^n$, there exists $L>0$ such that
  $\|f(x)-f(y)\| \leq L \|x-y\|$, for all $ x,y \in \SS_0$. We use
  $\exp(\cdot)$ to denote the exponential function.  We let $\Lie_f$
  denote the Lie derivative along the vector field $f:\real^n
  \rightarrow \real^n$.  A continuous function $h:\mathbb R
  \rightarrow \mathbb R$ is of class-$\mathcal K$ if it is strictly
  increasing and $h(0)=0$. In addition, the function is
  class-$\mathcal K_\infty$ if it also satisfies $\lim_{r\rightarrow
    \infty} h(r)=\infty$.}.

\subsubsection*{Graph Theory}
Our exposition follows~\cite{FB-JC-SM:08cor}. We denote a graph by
$\mathcal G=(\mathcal V,\mathcal E)$, with $\mathcal V$ as the set of
vertices and $\mathcal E \subseteq \mathcal V \times \mathcal V$ as
the set of edges. We consider undirected graphs, where $(i,j)\in
\mathcal E$ implies $(j,i)\in \mathcal E$. A path between two vertices
$i,j\in \mathcal V$ is an ordered sequence of vertices starting with
$i$ and ending with $j$ such that all pairs of consecutive vertices
are elements of the set $\mathcal E$. A graph is connected if there
exists a path between any two vertices.  Vertices $i,j\in \mathcal V$
are neighbors if $(i,j)\in \mathcal E$. We let $\mathcal N_i$ denote
the set composed of vertex $i$ and all its neighbors. We add the
subscript $x_{\mathcal N_i}$ to represent the subvector of a vector
$x$ formed from the entries associated with~${\mathcal N_i}$. The
adjacency matrix $\mathbf A \in \real^{\vert \mathcal V \vert \times
  \vert \mathcal V \vert}$ has entries $\mathbf A_{ij}=\mathbf
A_{ji}=1$ if $i$ and $j$ are neighbors, and $\mathbf A_{ij}=\mathbf
A_{ji}=0$ otherwise. The degree of a node $i$ is $d(i) :=
\sum_{j\in\mathcal N_i} \mathbf A_{ij}$. The degree matrix $\mathbf D$
is the diagonal matrix with $\mathbf D_{ii} = d(i)$. The Laplacian
matrix $\mathbf L:= \mathbf D -\mathbf A$ has nonnegative real
eigenvalues and a simple eigenvalue of 0 with an eigenvector
$\ones$ iff the graph $\mathcal G$ is connected.

\subsubsection*{Dynamic Average Consensus}
Consider a group of $N$ agents communicating over an undirected
graph~$\mathcal G$.  Each agent $i\in \mathcal V = \until N$ has a
continuously differentiable reference signal
$\map{W_i}{[0,\infty)}{\real}$. Dynamic average consensus aims at
making the agents track asymptotically the average of the reference
signals. For convenience, let $W= (W_1,\dots,W_N)$. Here we employ
the dynamic average consensus
algorithm~\cite{SSK-BVS-JC-RAF-KML-SM:19},
\begin{equation}\label{sys:DAC}
  \dot y = \dot W -\rho\mathbf Ly ,
\end{equation}
where each component of $y\in \real^N$ is the agents' estimate of the
average, $\rho > 0$ is a rate of convergence parameter, and $\mathbf
L$ is the Laplacian matrix of the graph.  The following result shows
that with the correct initialization and a suitable assumption on the
evolution of~$W$, each state $y_i$ asymptotically tracks the average
$\ones^\top W(t)/N$ of the reference signal.  The result is a
refinement of~\cite[Thm.2]{SSK-BVS-JC-RAF-KML-SM:19} to reference
signals whose time derivative is bounded exponentially and its proof
is presented in the appendix.


\begin{lemma}\longthmtitle{Tracking Error Bound}\label{lem:trackbound}
  Consider the dynamic average consensus dynamics~\eqref{sys:DAC} with
  a reference signal $W$ whose time derivative is bounded
  exponentially, i.e., $\|\dot W(t)\|\leq c_{\dot W} \exp(-rt)$ with a
  constant $c_{\dot W}>0$, for time $t\in[0,s)$. Define the tracking
  error as $\epsilon := y-\ones\ones^\top W/N$. If the initialization
  of~$y$ is such that $\ones^\top y(0) =
  \ones^\top W(0)/N$, then the tracking error is also bounded for time
  $t\in[0,s)$ as
  \begin{multline}\label{eq:trackbound}
    \|\epsilon(t)\| \leq \frac{c_{\dot W}}{\rho\lambda_2-r} \exp(-rt)
    \\
    +\left(\|\epsilon(0)\| - \frac{c_{\dot
          W}}{\rho\lambda_2-r}\right)\exp(-\rho \lambda_2 t)
  \end{multline}
  where $\lambda_2$ is the second smallest eigenvalue of the Laplacian
  matrix~$\mathbf L$.~\hfill \qed
\end{lemma}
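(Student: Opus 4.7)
The plan is to recast~\eqref{sys:DAC} as a linear ODE in $\epsilon$ with exponentially vanishing forcing, confine the trajectory to $\ones^\perp$ using the stated initialization, and then extract~\eqref{eq:trackbound} via the variation-of-constants formula together with the spectral gap $\lambda_2$ of $\mathbf L$ on $\ones^\perp$.

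First, differentiating $\epsilon = y - \ones\ones^\top W/N$ and using $\mathbf L\ones = 0$ yields
\begin{equation*}
\dot\epsilon = P\dot W - \rho\mathbf L\,\epsilon, \quad P := \mathbf{I}-\ones\ones^\top/N,
\end{equation*}
because $\mathbf L y = \mathbf L(\epsilon+\ones\ones^\top W/N) = \mathbf L\epsilon$. Premultiplying by $\ones^\top$ and using $\ones^\top P = 0$ and $\ones^\top\mathbf L = 0$ gives $\tfrac{d}{dt}(\ones^\top\epsilon) = 0$, so the initialization hypothesis forces $\ones^\top\epsilon(t) = 0$, and hence $\epsilon(t)\in\ones^\perp$ throughout $[0,s)$.

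Second, since $-\rho\mathbf L$ leaves $\ones^\perp$ invariant and contracts it with rate at least $\rho\lambda_2$, the variation-of-constants formula yields
\begin{equation*}
\epsilon(t) = e^{-\rho\mathbf L t}\epsilon(0) + \int_0^t e^{-\rho\mathbf L(t-\tau)} P\dot W(\tau)\,d\tau.
\end{equation*}
Taking norms, using $\|e^{-\rho\mathbf L t}|_{\ones^\perp}\| \leq \exp(-\rho\lambda_2 t)$, and bounding $\|P\dot W(\tau)\| \leq \|\dot W(\tau)\| \leq c_{\dot W}\exp(-r\tau)$, I obtain
\begin{equation*}
\|\epsilon(t)\| \leq \exp(-\rho\lambda_2 t)\|\epsilon(0)\| + c_{\dot W}\int_0^t \exp(-\rho\lambda_2(t-\tau))\exp(-r\tau)\,d\tau.
\end{equation*}

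Finally, evaluating the integral as $(\exp(-rt)-\exp(-\rho\lambda_2 t))/(\rho\lambda_2 - r)$ and regrouping by exponential recovers~\eqref{eq:trackbound}. The main step to watch is confirming $\epsilon(t)\in\ones^\perp$, so that the spectral gap $\lambda_2$, rather than the zero eigenvalue of $\mathbf L$, governs the contraction; this is exactly what the initialization of $y$ is designed to guarantee. A Lyapunov-style alternative using $V = \tfrac{1}{2}\|\epsilon\|^2$ combined with the comparison lemma (to deal with the nonsmoothness of $\|\epsilon\|$ at zero) produces the same bound without invoking the matrix exponential.
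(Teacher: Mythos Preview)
Your argument is correct. The setup---deriving $\dot\epsilon = P\dot W - \rho\mathbf L\epsilon$ and using the initialization to confine $\epsilon$ to $\ones^\perp$---matches the paper exactly. The difference lies in how the scalar bound is extracted. The paper works with $\tfrac{d}{dt}\|\epsilon\|^2$, obtains the differential inequality $\tfrac{d}{dt}\|\epsilon\|^2 \le -2\rho\lambda_2\|\epsilon\|^2 + 2c_{\dot W}\exp(-rt)\|\epsilon\|$, and then invokes the Comparison Lemma against the explicit solution of the associated Bernoulli equation $v\dot v = -\rho\lambda_2 v^2 + c_{\dot W}\exp(-rt)v$, with a separate remark for the case $\|\epsilon\|=0$. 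You instead write the variation-of-constants formula, bound the semigroup on $\ones^\perp$ by $\exp(-\rho\lambda_2 t)$, and evaluate the convolution integral directly. Your route is more direct and sidesteps both the Bernoulli machinery and the nonsmoothness of $\|\epsilon\|$ at the origin; the paper's route stays entirely at the scalar level and avoids matrix exponentials. The Lyapunov/comparison alternative you mention in your final sentence is precisely what the paper does.
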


\section{Problem Formulation}\label{sec:problem}
%
Consider a nonlinear control system of the form
\begin{align*}
  \dot x = F(x,u),~x\in \real^n,~u\in \real^m ,
\end{align*}
with $\map{F}{\real^n \times \real^m}{\real^n}$.  The digital
implementation of a desired feedback policy
$\map{\kappa}{\real^n}{\real^m}$ as $u=\kappa(x)$ can be accomplished
through a sample-and-hold strategy.  This consists of updating the
control signal at a specific time $t_k$, for $k\in \{0\}\cup
\naturals$, and keeping it constant up until $t_{k+1}$, when the
evaluation of the feedback policy provides the next adjustment. As a
result, the closed-loop system~is
\begin{equation}\label{sys:NL}
  \dot x = F(x,\kappa(x+e))= f(x,e) ,
\end{equation}
where the error $e=x_k-x$ is the state deviation from the last update
at iteration~$k$ (here, we use the shorthand notation $x_k=x(t_k)$).
The challenge is then how to prescribe the sequence of update times
$\{t_k\}$ in order to ensure that the digital implementation retains
the convergence and performance properties of the original
continuous-time system.  

Event-triggered control looks past time-periodic implementations to
identify a state-dependent trigger criterion to determine the update
times.  To come up with such a criterion for a general nonlinear
system, a common starting point is to assume that there exists an
Input-to-State Stability (ISS) Lyapunov function for~\eqref{sys:NL},
see e.g.,~\cite{PT:07,MA-RP-JD-DN:16,RP-PT-DN-AA:15}. Formally, we
assume there exists a smooth function $V:\real^n \rightarrow \real$
and class-$\mathcal K_\infty$ functions $\underline \alpha$,
$\overline \alpha$, $\alpha$, and $\gamma$ satisfying
\begin{subequations}\label{eq:ISS}
  \begin{align}
    \underline \alpha(\|x\|) &\leq V(x) \leq \overline
    \alpha(\|x\|) , \label{eq:ISSLyap}
    \\
    \Lie_fV(x,e)&\leq -\alpha(\|x\|)+\gamma(\|e\|). \label{eq:ISSrate}
  \end{align}
\end{subequations}
The seminal work~\cite{PT:07} provides the trigger design
\begin{equation}\label{Trigger:Paulo}
  t_{k+1} = \setdefb{t \geq
    t_k}{-\sigma\alpha(\|x(t)\|)+\gamma(\|e(t)\|) = 0} ,
\end{equation}
with design parameter~$\sigma\in(0,1)$. Under~\eqref{Trigger:Paulo},
the rate of change of the Lyapunov function along~\eqref{sys:NL}
satisfies
\begin{align*}
  \frac{d}{dt} V(x(t)) \leq (\sigma-1)\alpha(\|x(t)\|).
\end{align*}
Therefore, by design, the certificate $V$ decreases along the
trajectories of the sample-and-hold implementation.  Stability cannot
be established from this fact alone, however, due to the possibility
of Zeno behavior: the state-dependency of the trigger criterion makes
it possible for the inter-event time between consecutive updates to become
increasingly small. This, in turn, leaves open the possibility of an
infinite number of updates within a finite period of
time. 
A common strategy to rule out Zeno behavior is to establish the
existence of a minimum inter-event time (MIET).  For the trigger
design~\eqref{Trigger:Paulo}, the existence of a MIET can be
established under mild assumptions, cf.~\cite{PT:07}.

\begin{figure}[tbh]
  \centering
  \subfigure[]{\fbox{\includegraphics[width=.425\linewidth]{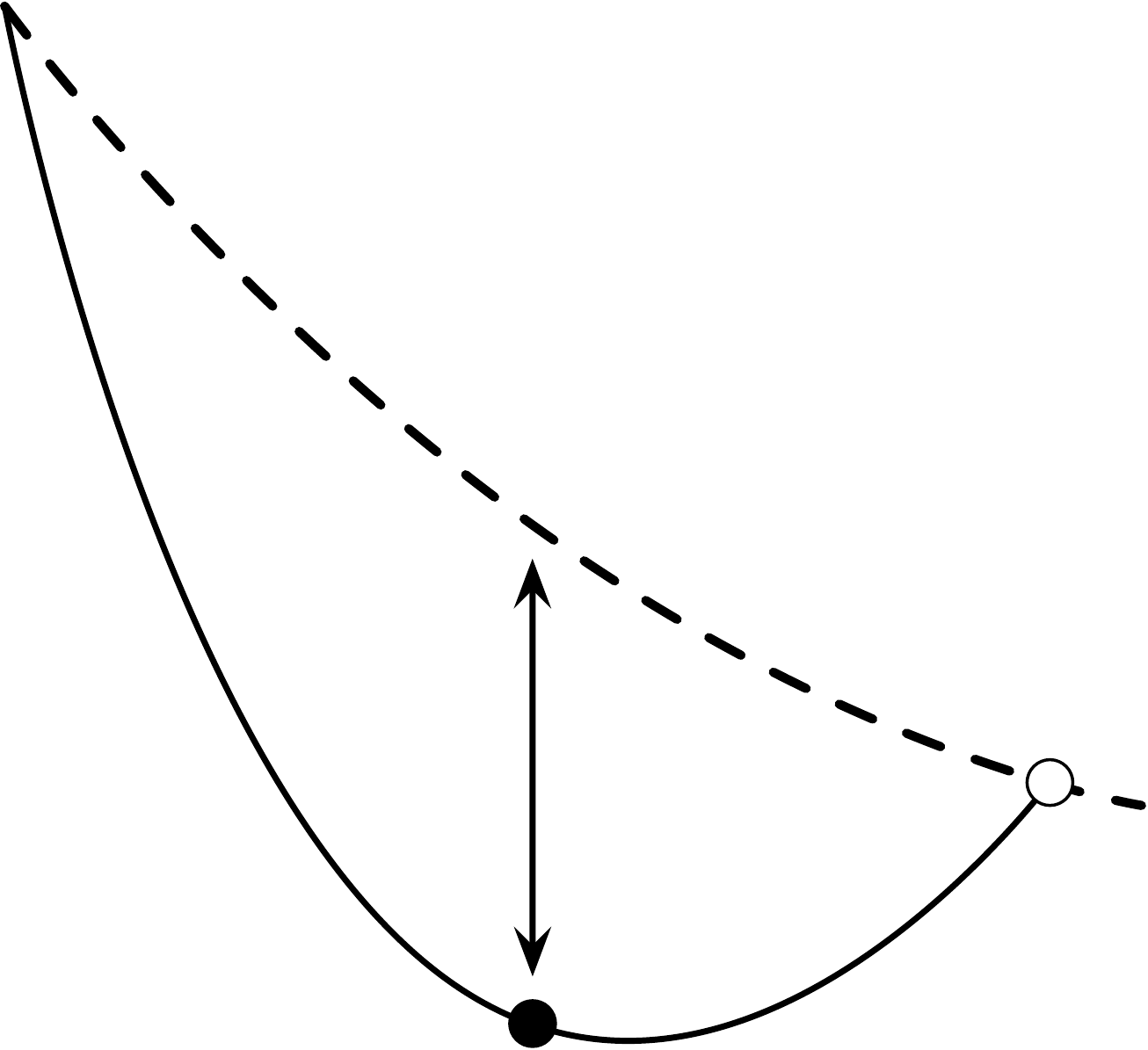}}}
  \;
  \subfigure[]{\fbox{\includegraphics[width=.425\linewidth]{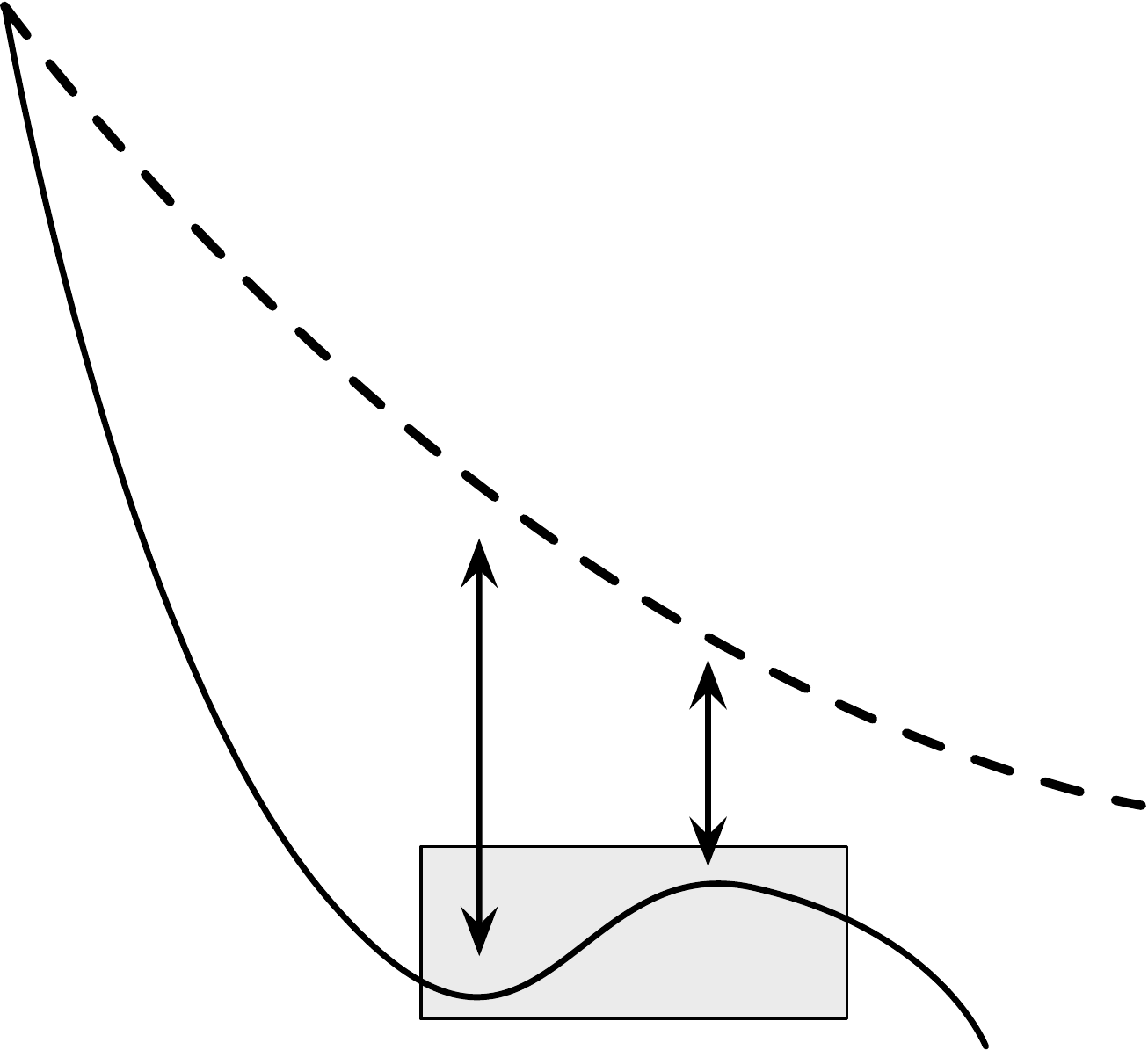}}}
  \caption{Prescribed performance (dashed line) and evolution of the
    certificate (solid line) under state-dependent triggering. (a) the
    controller update (black circle) prescribed
    by~\eqref{Trigger:Paulo} does not take into account the
    performance residual, which would otherwise be positive until the
    curve of the certificate meets the prescribed performance (empty
    circle). (b) a possible evolution of the certificate that
    momentarily violates (gray area) the derivative condition on the
    certificate specified by~\eqref{Trigger:Paulo}, does not require a
    controller update while always meeting the performance
    specification.}\label{fig:illustration-derivative}
\end{figure}

Triggering according to state-triggered criteria
like~\eqref{Trigger:Paulo} might lead to fewer controller updates than
a time-triggered implementation at the cost of impacting performance
(as measured, for instance, by the rate of decrease of the
certificate~$V$). Ideally, one would like the system to trigger as
sparingly as possible while still guaranteeing a prescribed
performance regarding convergence. In that
regard,~\eqref{Trigger:Paulo} tends to overprescribe updates, as the
criterion looks exclusively at the derivative of the certificate
without taking into account how much the certificate has decreased
since the last update,
cf.~Figure~\ref{fig:illustration-derivative}(a).  We refer to the
difference between the prescribed performance and the value of the
certificate as the \emph{performance residual}.  Presumably, allowing
the certificate to momentarily violate the derivative condition, with
leeway specified as an increasing function of the performance
residual, could result in executions with even fewer controller
updates that still meet the performance requirements,
cf. Figure~\ref{fig:illustration-derivative}(b). In the context of
network systems, the overprescription of controller updates is also
related to the fact that the design of distributed event-triggered
schemes based on~\eqref{Trigger:Paulo} might result, in general, in
sample-and-hold implementations that do not have a MIET,
see~\cite{WPMHH-KHJ-PT:12,MCFD-WPMH:12,PT-NC:14,DPB-WPMHH:14}.


The formalization of the ideas described above leads us to propose the
\emph{\pbb}design methodology for trigger design.  In
Section~\ref{sec:lin}, we limit our discussion to linear systems to
motivate and introduce the basic idea.  We develop it further for
general nonlinear systems in Section~\ref{sec:nonlin}. As we show in
our exposition, the new approach naturally leads to longer inter-event
times while meeting the specified performance. This provides the
necessary groundwork for tackling the design of Zeno-free
distributed event-triggered schemes for network systems in
Section~\ref{sec:dist}.

\section{\PBB Event-Triggered Control Designs for Linear
  Systems}\label{sec:lin}

Here we introduce the \pbb ETC framework. In this
section, we limit our discussion to linear systems for simplicity of
exposition.  Consider the sample-and-hold linear control system
\begin{equation}\label{sys:lin}
  \dot x =  Ax +BK x_k = (A+BK)x + BKe ,
\end{equation}
with matrices $A\in\real^{n\times n}$, $B\in \real^{n\times m}$ and
$K\in \real^{m \times n}$ so that $A+BK$ is Hurwitz. In this case, it
is easy to guarantee the existence of an ISS Lyapunov function
satisfying~\eqref{eq:ISS}. In fact, using the fact that $A+BK$ is
Hurwitz, there exists positive definite matrices $P$ and $Q$ such that 
\begin{subequations}\label{eq:Lyap-lin}
  \begin{equation}
    V(x) = x^\top P x
  \end{equation}
  is an ISS Lyapunov function with
  \begin{align}\label{eq:linrate}
    \Lie_fV(x,e) &=-x^\top Qx +2x^\top P BKe \notag
    \\
    &\leq \left(\frac{\|PBK\|}{\theta}- \lambda_{\min} (Q)\right)\|x\|^2
    + \theta\|PBK\|\|e\|^2 \notag
    \\
    &:= -c_\alpha \|x\|^2 + c_\gamma \|e\|^2 ,
  \end{align}
\end{subequations}
where $\lambda_{\min}(Q)$ is the minimum eigenvalue of~$Q$ and Young's
inequality~\cite{GHH-JEL-GP:52} is applied with $\theta>0$ selected
appropriately so that $c_\alpha$, $c_\gamma$ are positive. In
particular, for the original continuous-time system ($e\equiv 0$
in~\eqref{sys:lin}), one obtains the performance guarantee
\begin{align}\label{eq:LyapExpBound-cont}
  V(x(t)) \leq V(x_0) \exp \big ( c_\alpha \|P\|^{-1} t\big) ,
\end{align}
where $x_0$ denotes the initial condition. We next turn to the trigger
design.

\subsection{Derivative- and Function-Based Trigger Designs}

For the sample-and-hold linear system~\eqref{sys:lin}, the
derivative-based trigger design~\eqref{Trigger:Paulo} takes the form
\begin{align*}
  t_{k+1} = \min\setdefb{t\geq t_k}{-\sigma c_\alpha\|x\|^2+c_\gamma
    \|e\|^2 = 0} ,
\end{align*}
with the certificate along any trajectory satisfying $\frac{d}{dt}
V(x(t)) \leq (\sigma-1)c_\alpha\|x(t)\|^2$.  Using this inequality,
the evolution of the certificate satisfies
\begin{equation}\label{eq:LyapExpBound}
  V(x(t)) \leq V(x_0)\exp \big ( (\sigma-1) c_\alpha \|P\|^{-1} t\big)  .
\end{equation}
A higher value of $\sigma \in (0,1)$ results in a longer inter-event
time and a slower exponential rate on the evolution of the
certificate. This presents a trade-off for design. In order to compare
different designs fairly, it would seem reasonable to establish a
common performance criterion. Given the exponential convergence
characteristic of linear systems, prescribing a desired rate of
convergence $r>0$ is a natural candidate. Formally, we specify
\begin{equation}\label{eq:lin_req}
  V(x(t)) \leq V(x_0)\exp(-rt) ,
\end{equation}
at all time and for any initial condition. Given the
performance~\eqref{eq:LyapExpBound-cont} of the continuous
state-feedback system, we require $r< c_\alpha \|P\|^{-1}$.  Since the
derivative-based trigger is guaranteed to perform according
to~\eqref{eq:LyapExpBound}, one can see that $\sigma =
1-\frac{r\|P\|}{c_\alpha}$ is the value that yields the longest
inter-event time (for the derivative-based design) while still
satisfying the performance specification. The following result
summarizes the asymptotic convergence properties under the
derivative-based trigger design.



\begin{lemma}\longthmtitle{Derivative-Based Design -- Linear
    Case}\label{lem:deriv-lin} 
  Consider the sample-and-hold linear system~\eqref{sys:lin} with an
  ISS Lyapunov function~\eqref{eq:Lyap-lin}. Given a desired rate of
  convergence $r<c_\alpha\|P\|^{-1}$ and $\sigma \in
  (0,1-\frac{r\|P\|}{c_\alpha})$, let $\map{g}{\real^n \times
    \real^n}{\real}$ be any function such that
  \begin{align*}
    \Lie_fV(x,e)\leq g(x,e)\leq (\sigma-1)c_\alpha\|x\|^2+c_\gamma
    \|e\|^2.
  \end{align*}
  Define the derivative-based trigger time as
  \begin{equation}\label{Trigger:deriv-lin}
    t^\deriv_{k+1} = \min \setdefb{t\geq t_k}{g(x(t),e(t)) +rV(x(t))\geq 0}.
  \end{equation}
  There exists a MIET $\tau^\deriv_\sigma>0$ such that if
  $V(x(t_k))\leq V(x_0)\exp(-rt_k)$, then $ t^\deriv_{k+1}-t_{k} \ge
  \tau^\deriv_\sigma$.  As a consequence, if the trigger sequence
  $\{t_k\}_{k=0}^\infty$ is defined iteratively via the
  derivative-based trigger, then $V(x(t))< V(x_0)\exp(-rt)$ for all
  $t>0$, and the origin is globally exponentially
  stable. ~\hfill$\blacksquare$
\end{lemma}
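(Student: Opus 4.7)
The plan is to first establish the MIET by tracking the evolution of the ratio $\phi(t):=\|e(t)\|/\|x(t)\|$ (with the convention $\phi=0$ whenever $x=0$) between consecutive trigger times. Since the sample-and-hold reset gives $e(t_k)=0$, we have $\phi(t_k^+)=0$. Using the closed-loop equations $\dot{x}=(A+BK)x+BKe$ and $\dot{e}=-\dot{x}$, I would derive a Riccati-type differential inequality of the form $\dot{\phi}\leq a_0+a_1\phi+a_2\phi^2$, where the constants depend only on $\|A+BK\|$ and $\|BK\|$ (using Dini derivatives to handle the non-smooth points of $\|\cdot\|$ and observing that $x\equiv 0$ is an invariant set that can be excluded without loss of generality). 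Separately, I would exploit the sandwich upper bound to translate the trigger condition into a threshold on $\phi$: if $g(x,e)+rV(x)\geq 0$, then by the upper bound on $g$ and the estimate $V(x)\leq \|P\|\,\|x\|^2$, we obtain $c_\gamma \|e\|^2\geq ((1-\sigma)c_\alpha-r\|P\|)\|x\|^2$, so $\phi\geq \phi^\star := \sqrt{((1-\sigma)c_\alpha-r\|P\|)/c_\gamma}$. The hypothesis $\sigma\in(0,1-r\|P\|/c_\alpha)$ makes $\phi^\star$ strictly positive, and $\tau^\deriv_\sigma$ is then the time required for the solution of $\dot{\psi}=a_0+a_1\psi+a_2\psi^2$, $\psi(0)=0$, to reach $\phi^\star$ — finite and strictly positive by the quadratic comparison lemma.

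Next, for the performance claim, I would argue by induction over $k$. Between triggers, the very definition of $t^\deriv_{k+1}$ guarantees $g(x,e)+rV(x)<0$ strictly, and since $\Lie_f V(x,e)\leq g(x,e)$, we get
\begin{equation*}
  \tfrac{d}{dt}V(x(t)) \;<\; -rV(x(t)), \qquad t\in(t_k,t_{k+1}).
\end{equation*}
The comparison lemma then yields $V(x(t))<V(x(t_k))\exp(-r(t-t_k))$ on that interval; continuity of $V$ along the trajectory extends the bound to the closed interval. Combining this with the inductive hypothesis $V(x(t_k))\leq V(x_0)\exp(-rt_k)$ gives $V(x(t))<V(x_0)\exp(-rt)$ for all $t\in(t_k,t_{k+1}]$, closing the induction. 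The uniform MIET established in the first step rules out Zeno behavior, so the trigger sequence diverges and the bound holds globally in time.

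Finally, global exponential stability follows immediately from $\lambda_{\min}(P)\|x(t)\|^2\leq V(x(t))<V(x_0)\exp(-rt)\leq \|P\|\,\|x_0\|^2\exp(-rt)$, which yields an exponential decay estimate on $\|x(t)\|$ uniform in the initial condition.

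The main obstacle is the MIET step, and specifically the conversion of the $(g+rV)$-based trigger into a uniform positive threshold on $\phi$: it is precisely this step that pins down why the restricted range $\sigma<1-r\|P\|/c_\alpha$ is necessary (if $\sigma$ were any larger, $\phi^\star$ would collapse to zero and the trigger could fire immediately). The one-sidedness of the sandwich, together with the need to use Dini derivatives when differentiating $\|e\|$ and $\|x\|$, is a secondary technicality but is standard once the comparison framework is in place.
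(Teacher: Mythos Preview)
Your proof is correct, but it takes a different route from the paper's. The paper does not give a standalone proof of this lemma; it simply observes that the result is the special case $c_\beta=0$ of Proposition~\ref{prop:barr-lin}. The proof of that proposition exploits linearity directly: it writes the state explicitly as $x(t)=G(\Delta t_k)x_k$ with $G(\tau)=\exp(A\tau)+\int_0^\tau\exp(A(\tau-s))\,ds\,BK$, substitutes into the (surrogate upper bound of the) trigger condition, and reduces it to a quadratic form $x_k^\top M(\Delta t_k)x_k=0$. The MIET is then the first $\tau>0$ at which $\det(M(\tau))=0$, a state-independent quantity computable offline.

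Your approach instead follows the Tabuada-style ratio argument: bound the evolution of $\phi=\|e\|/\|x\|$ by a scalar Riccati ODE and show the trigger cannot fire before $\phi$ reaches the threshold $\phi^\star=\sqrt{((1-\sigma)c_\alpha-r\|P\|)/c_\gamma}$. This is exactly the method the paper employs for the \emph{nonlinear} case (Propositions~\ref{prop:deriv} and~\ref{prop:exp}, via Lemma~\ref{lem:errBnd}), so your argument is perfectly sound and in fact more portable. The trade-off is sharpness: the matrix-exponential route gives a tighter, linear-specific MIET (the exact first loss of definiteness of $M(\tau)$), whereas the ratio bound is generally more conservative because it passes through norm inequalities on $A+BK$ and $BK$ rather than using the actual flow. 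Your performance and stability arguments match the paper's reasoning essentially verbatim.
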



Lemma~\ref{lem:deriv-lin} is essentially presented in~\cite{PT:07}. We
omit its proof as it is a special case of
Proposition~\ref{prop:barr-lin} below. The basic idea behind the
design~\eqref{Trigger:deriv-lin} is to keep the time derivative of the
Lyapunov function below an amount that, by application of the
Comparison Lemma~\cite[Lemma 3.4]{HKK:02}, would make the system
satisfy the desired performance, i.e., $\frac{d}{dt}V(x(t)) <
-rV(x(t))$. As a result, the gap $V(x_0)\exp(-rt)-V(x(t))$ between the
desired performance and the Lyapunov function, which we call
\emph{performance residual}, is always increasing until the next
update, see Figure~\ref{fig:illustration-derivative}. While meeting
the desired specifications means keeping the performance residual
nonnegative, doing so by having it always increase is overly
conservative.
To produce a less conservative design, one can instead look at the
value of the Lyapunov function itself (rather than its time
derivative), as specified in the following result.

\begin{lemma}\longthmtitle{Function-Based Design --
    Linear Case}\label{lem:func-lin}
  Consider the sample-and-hold linear system~\eqref{sys:lin} with an
  ISS Lyapunov function~\eqref{eq:Lyap-lin}. Given a desired rate of
  convergence $r<c_\alpha/\|P\|$, define
  the function-based trigger time as
  \begin{equation}\label{Trigger:func-lin}
    t^\func_{k+1} \!=\! \min\setdefb{t> t_k}{0\geq V(x_0)\exp(-rt) -V(x(t))}.
  \end{equation}
  There exists a MIET $\tau^\func_r>0$ such that if $V(x(t_k))\leq
  V(x_0)\exp(-rt_k)$, then $t^\func_{k+1}-t_{k} \geq \tau^\func_r $.
  As a consequence, if the trigger sequence $\{t_k\}_{k=0}^\infty$ is
  defined iteratively via the function-based trigger, then
  $V(x(t))\leq V(x_0)\exp(-rt_k)$, and the origin is globally
  exponentially stable.~\hfill$\blacksquare$
\end{lemma}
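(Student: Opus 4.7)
The proof will establish (a) a positive, state-independent lower bound $\tau^\func_r$ on the inter-event time, and (b) propagate the performance bound by induction in $k$, from which global exponential stability is immediate via $\|x\|^2\le V(x)/\lambda_{\min}(P)$. The MIET is the technical heart of the argument; the rest is essentially bookkeeping.

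To build the MIET, I will construct, between consecutive triggers, a scalar upper comparison function for the ratio $W(\tau) := V(x(t_k+\tau))/V(x(t_k))$. Starting from~\eqref{eq:linrate} and using $\|x\|^2\ge V/\|P\|$, we get $\dot V \le -(c_\alpha/\|P\|) V + c_\gamma\|e\|^2$. Because the control is held at $Kx_k$ on $[t_k,t_{k+1})$ and $e(t_k)=0$, the error obeys the linear ODE $\dot e = A e -(A+BK)x_k$ with zero initial condition, whose explicit solution gives $\|e(t_k+s)\|\le h(s)\|x_k\|$ for a continuous $h$ with $h(0)=0$ and $h(s)=O(s)$ as $s\downarrow 0$. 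Combined with $\|x_k\|^2\le V(x(t_k))/\lambda_{\min}(P)$, the Comparison Lemma yields $W(\tau)\le\Psi(\tau)$, where
\begin{align*}
  \Psi(\tau) = \exp\!\Big(\!-\tfrac{c_\alpha}{\|P\|}\tau\Big) + \tfrac{c_\gamma}{\lambda_{\min}(P)}\int_0^\tau \exp\!\Big(\!-\tfrac{c_\alpha}{\|P\|}(\tau-s)\Big)h(s)^2\,ds.
\end{align*}
Since $h(s)^2=O(s^2)$, one has $\Psi(0)=1$ and $\Psi'(0)=-c_\alpha/\|P\|$. Hence $\Psi(\tau)-\exp(-r\tau)$ vanishes at $\tau=0$ with derivative $r-c_\alpha/\|P\|<0$ there; by continuity there exists $\tau^\func_r>0$ with $\Psi(\tau)\le\exp(-r\tau)$ on $[0,\tau^\func_r]$. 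Crucially, $\Psi$ depends only on system data, so $\tau^\func_r$ is uniform in $x_k$.

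The induction is then straightforward. At $t_0=0$ the hypothesis holds. If $V(x(t_k))\le V(x_0)\exp(-rt_k)$, then for any $\tau\in[0,\tau^\func_r]$,
\begin{align*}
  V(x(t_k+\tau))\le \Psi(\tau)V(x(t_k))\le \exp(-r\tau)V(x_0)\exp(-rt_k)=V(x_0)\exp(-r(t_k+\tau)),
\end{align*}
so the trigger~\eqref{Trigger:func-lin} cannot fire on $(t_k,t_k+\tau^\func_r)$, proving $t^\func_{k+1}-t_k\ge \tau^\func_r$. By the trigger's definition, $V(x(t))\le V(x_0)\exp(-rt)$ on all of $[t_k,t_{k+1}]$, and continuity transfers the hypothesis to $t_{k+1}$, closing the induction. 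The bound $\|x\|^2\le V/\lambda_{\min}(P)$ then gives global exponential decay of $\|x\|$ with rate $r/2$.

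The main obstacle is ensuring $\Psi'(0)=-c_\alpha/\|P\|$ \emph{exactly}, which is what makes the hypothesis $r<c_\alpha/\|P\|$ sharp. It hinges on $e(t_k)=0$, so that the disturbance-like contribution $c_\gamma\|e\|^2$ is only second order in $\tau$ and does not spoil the first-order decay of $\Psi$. A naive upper bound on $\|e\|^2$ that is merely $O(\tau)$ would leak into $\Psi'(0)$ and destroy the sharp threshold on $r$; careful use of $e(t_k)=0$ to isolate the $O(\tau^2)$ behavior is what keeps the argument tight.
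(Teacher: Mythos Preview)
Your argument is correct. The paper, however, does not give a standalone proof of Lemma~\ref{lem:func-lin}: it remarks that the function-based trigger is the $c_\beta\to\infty$ limit of the performance-barrier-based trigger in Proposition~\ref{prop:barr-lin}, and relies on the proof of that proposition. There, the linear structure is exploited directly: the explicit flow $x(t)=G(\Delta t_k)x_k$ is substituted into the trigger inequality, reducing it to a quadratic form $x_k^\top M(\Delta t_k)x_k=0$, and the MIET is identified as the first positive root of $\det M(\tau)=0$. Your route is different: you stay with the scalar ISS bound $\dot V\le -(c_\alpha/\|P\|)V+c_\gamma\|e\|^2$, bound $\|e\|$ via the variation-of-constants formula for $\dot e=Ae-(A+BK)x_k$, and use the Comparison Lemma to get the state-independent envelope $\Psi(\tau)$, from which the MIET follows by the first-order argument $\Psi'(0)=-c_\alpha/\|P\|<-r$. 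The paper's approach buys an explicit, computable MIET formula tied to the linear flow; your approach only gives existence, but it is self-contained, does not require taking a limit in $c_\beta$, and is essentially the same mechanism the paper later uses for the nonlinear exponential case in Proposition~\ref{prop:exp} (where $\phi$ and $\xi$ play the role of your $h$ and the exponent in $\Psi$). One minor tightening: at the trigger time one has equality $V(x(t_k))=V(x_0)\exp(-rt_k)$, so you need $\Psi(\tau)<\exp(-r\tau)$ strictly on $(0,\tau^\func_r]$; your derivative argument gives exactly this, but state it as a strict inequality rather than $\le$.
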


The function-based design relies on the idea of directly enforcing $V(x(t))\leq 
V(x_0)\exp(-rt)$. A problem with this design, however, is that it waits until 
the last moment, i.e., when the performance residual becomes zero (empty circle 
in Fig.~\ref{fig:illustration-derivative}(a)), to prescribe a controller update. 
Consequently, the implementation is not robust to errors (e.g., delays in 
evaluation or actual implementation).  The \pbb trigger design, 
proposed next, is motivated by the idea of overcoming the conservatism of the 
derivative-based design and the lack of robustness of the function-based one.

\subsection{\PBB Trigger Design}

Our ensuing design builds on the observation that to ensure that the
evolution of $V$ satisfies the specified performance, $V$ needs to
decrease faster than (or at the same rate as) the specification only
when their values are equal. Formally, this can be established using
Nagumo theorem~\cite {FB-SM:07}: $V(x(t))\leq V(x_0)\exp(-rt)$ if
and only if
\begin{align}\label{eq:condition}
  \frac{d}{dt} V(x(t))\leq -rV(x(t))~\text{when}~V(x(t))=
  V(x_0)\exp(-rt).
\end{align}
Note that this condition does not restrict how fast $V$ changes when
$V(x(t))< V(x_0)\exp(-rt)$, no matter how small the performance
residual is. One can readily see that the
condition~\eqref{eq:condition} suffers from the same lack of
robustness as the function-based design. To address this, and inspired
by how control barrier
functions~\cite{ADA-XX-JWG-PT:16,ADA-SC-ME-GN-KS-PT:19} restrict the
speed of their own evolution as the state approaches the boundary of
the safe set, we instead prescribe
\begin{align*}
  \frac{d}{dt} V(x(t))+rV(x(t))\leq
  c_\beta\big(V(x_0)\exp(-rt)-V(x(t))\big) ,
\end{align*}
with a nonnegative constant $c_\beta\geq 0$. The key idea is
restricting how fast $V$ can increase proportionally to the
performance residual.  The following result summarizes the asymptotic
convergence properties under this type of prescription.

\begin{proposition}\longthmtitle{\PBB Design --
    Linear Case}\label{prop:barr-lin}
  Consider the sample-and-hold linear system~\eqref{sys:lin} with an
  ISS Lyapunov function~\eqref{eq:Lyap-lin}. Given a desired rate of
  convergence $r<c_\alpha/\|P\|$ and $\sigma \in
  (0,1-\frac{r\|P\|}{c_\alpha})$, let $g$ be as in
  Lemma~\ref{lem:deriv-lin}. Define the \pbb trigger time
  as
  \begin{multline}\label{Trigger:barr-lin}
    t^\barr_{k+1} = \min \setdefb{t\geq t_k}{g(x(t),e(t)) +rV(x(t))
      \\\geq c_\beta\big(V(x_0)\exp(-rt) -V(x(t))\big)}.
  \end{multline}
  Let $ G(\tau) =\exp(A\tau)+\int_0^\tau \exp(A(\tau-s))ds BK$ and
  \begin{multline}
    M(\tau) =c_\beta P\exp(-r\tau)-c_\gamma\|\mathbf{I}-G(\tau)\|^2
    \\
    -G(\tau)^\top((c_\beta+r)P+(\sigma-1)c_\alpha \mathbf{I})G(\tau).
  \end{multline}
   The constant
  \begin{equation}\label{eq:inttime-lin}
    \tau^\barr_\sigma := \min\setdef{\tau>0}{\det(M(\tau))=0}, 
  \end{equation}
  is a MIET such that if $V(x(t_k))\leq V(x_0)\exp(-rt_k)$, then 
  $t^\barr_{k+1}-t_{k}\geq \tau^\barr_\sigma $.  As a consequence, if the trigger
  sequence $\{t_k\}_{k=0}^\infty$ is defined iteratively via the
  \pbb trigger, then $V(x(t))\leq V(x_0)\exp(-rt)$ for
  all time, and the origin is globally exponentially stable.
\end{proposition}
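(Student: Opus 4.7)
The plan is to induct on $k$, establishing simultaneously that (a) $t_{k+1}^\barr - t_k \geq \tau_\sigma^\barr$ and (b) the performance bound $V(x(t)) \leq V(x_0)\exp(-rt)$ holds throughout $[t_k, t_{k+1}]$. The base case $k=0$ is immediate. Throughout the argument I exploit the closed form $x(t_k+\tau) = G(\tau) x_k$ and $e(t_k+\tau) = (\mathbf{I}-G(\tau)) x_k$ afforded by the linear sample-and-hold dynamics.

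For part (a), my plan is to combine the upper bound $g(x,e)\leq (\sigma-1)c_\alpha\|x\|^2 + c_\gamma\|e\|^2$ from~\eqref{eq:linrate} with the inductive estimate $V(x_0)\exp(-rt)\geq x_k^\top P x_k\exp(-r\tau)$ to majorize the left-hand side of~\eqref{Trigger:barr-lin} by a quadratic form in $x_k$. After substituting the closed-form expressions for $x$ and $e$ and using $\|e\|^2 \leq \|\mathbf{I}-G(\tau)\|^2 \|x_k\|^2$, this quadratic form collapses to exactly $-x_k^\top M(\tau) x_k$, so that the trigger inequality cannot be satisfied as long as $M(\tau) \succ 0$. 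At $\tau = 0$, $G(0)=\mathbf{I}$ gives $M(0) = (1-\sigma)c_\alpha \mathbf{I} - rP$, which is positive definite precisely because the design constraint $\sigma < 1-r\|P\|/c_\alpha$ holds. Continuity of $M(\cdot)$ then ensures positive definiteness on an interval $[0,\tau^\ast)$, and the first loss of definiteness identifies $\tau_\sigma^\barr$ via $\det M(\tau) = 0$.

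For part (b), let $\phi(t) := V(x_0)\exp(-rt) - V(x(t))$ denote the performance residual. Between triggers, the definition~\eqref{Trigger:barr-lin} fails strictly, giving $\dot V(x(t)) \leq \Lie_f V(x,e) \leq g(x,e) < -rV(x(t)) + c_\beta \phi(t)$. Differentiating $\phi$ rearranges this to $\dot\phi(t) > -(r+c_\beta)\phi(t)$, and the Comparison Lemma then yields $\phi(t) \geq \phi(t_k)\exp(-(r+c_\beta)(t-t_k)) \geq 0$ on $[t_k,t_{k+1}]$. In particular $V(x(t_{k+1})) \leq V(x_0)\exp(-rt_{k+1})$, restoring the hypothesis at the next step and closing the induction.

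Once (a) and (b) are established for all $k$, the uniform lower bound $\tau_\sigma^\barr > 0$ rules out Zeno behavior, so the trajectory extends to all of $[0,\infty)$, and combining the performance bound with $V(x) \geq \lambda_{\min}(P) \|x\|^2$ delivers global exponential stability at rate $r$. The main obstacle I anticipate is the quadratic-form manipulation that extracts $M(\tau)$ from the trigger condition: once that bookkeeping is in place, verifying $M(0) \succ 0$ is a one-line computation that converts the design constraint $\sigma < 1-r\|P\|/c_\alpha$ directly into a positive MIET.
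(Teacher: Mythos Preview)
Your proposal is correct and follows essentially the same route as the paper: both bound the trigger condition using $g(x,e)\le(\sigma-1)c_\alpha\|x\|^2+c_\gamma\|e\|^2$ together with the inductive estimate $V(x_0)\exp(-rt)\ge V(x_k)\exp(-r\tau)$, substitute the closed form $x=G(\tau)x_k$, and read off the MIET from the first loss of positive definiteness of~$M(\tau)$. The only minor difference is in part~(b): the paper defers the performance bound to the nonlinear case (Proposition~\ref{prop:barr}), where it is obtained via Nagumo's theorem, whereas you argue directly through the Comparison Lemma applied to the residual $\phi$; the two arguments are equivalent, and your explicit computation of $M(0)=(1-\sigma)c_\alpha\mathbf{I}-rP$ is exactly the verification the paper sketches verbally.
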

\begin{proof}
  First, we note that we can derive from the trigger design,
  $V(x(t))\leq V(x_0)\exp(-rt)$ for every interval
  $[t_k,t^\barr_{k+1})$, but we have omitted the proof here because it
  will appear in the proof of Proposition~\ref{prop:barr} later for
  the more general case. Nevertheless,
  we will prove here the result on the MIET, which will rule out the
  the sequence $\{t_k\}_{k=0}^\infty$ converging to a finite value
  (Zeno behavior).
  We start by deducing for each update
  \begin{align*}
    V(x(t_k))&\leq V(x_0)\exp(-rt_k)
    \\
    V(x(t_k))\exp(-r\Delta t_k) &\leq V(x_0)\exp(-rt)
  \end{align*}
  for the time $t \in [t_k,t_{k+1}]$ where $\Delta t_k = t-t_k$. Using
  this bound to lower bound the right-hand side of the trigger
  condition in~\eqref{Trigger:barr-lin}, as well as using the
  definition of $g$ to upper bound the left-hand side, we derive the
  condition
%
  \begin{multline}\label{eq:LinearTriggerCond}
    x^\top(rP+(\sigma-1)c_\alpha \mathbf{I})x +c_\gamma\|e\|^2
    \\
    = c_\beta(V(x_k)\exp(-r\Delta t_k)- x^\top Px)
  \end{multline}
  which must be met earlier. Note we have replaced inequality with
  equality due to continuity of all the terms along the
  trajectory. Under system \eqref{sys:lin}, we can find the expression
  for the state during each iteration as $x(t) =G(\Delta
  t_k)x_k$. Substituting the state and moving everything of the
  left-hand side to the right, \eqref{eq:LinearTriggerCond} becomes
  $$
  0=x_k^\top M(\Delta t_k)x_k.
  $$
  We know that $M(0)\succ 0$ because the right-hand side of
  \eqref{eq:LinearTriggerCond} is zero, and the left-hand side is
  negative at time $t_k$ due to the definition of~$r$. The MIET is
  given by when $M(\tau)$ transits from positive definite to
  semi-positive definite which is when there exists an $x_k$ such that
  the condition is satisfied. Therefore, the MIET is given
  by~\eqref{eq:inttime-lin}. As a result, $V(x(t))< V(x_0)\exp(-rt)$
  for all time. Lastly, the origin can be deemed exponentially stable
  as we can derive
  \begin{align*}
    \|x\| \leq
    \|x_0\|\frac{\|P\|^{1/2}}{\lambda_{\min}(P)^{1/2}}\exp(-rt/2),
  \end{align*}
  concluding  the proof.
\end{proof}


Proposition~\ref{prop:barr-lin} generalizes both
Lemmas~\ref{lem:deriv-lin} and~\ref{lem:func-lin}. Note that the
trigger design~\eqref{Trigger:deriv-lin} is recovered by selecting
$c_\beta = 0$ in~\eqref{Trigger:barr-lin}, and the trigger
design~\eqref{Trigger:func-lin} corresponds to the limit
of~\eqref{Trigger:barr-lin} as $c_\beta \rightarrow \infty$. Directly
from the construction of the trigger designs, one can deduce
$t^\deriv_{k+1}\leq t^\barr_{k+1}\leq t^\func_{k+1}$ (inequalities are
strict if $g$ is continuous). Therefore, we can adjust the parameter
$c_\beta$ to control the inter-event times, which is also evident in
the expression for the MIET. Note that the \pbb design enjoys longer
inter-event times than the derivative-based one while still being able
to achieve the prescribed performance. Although the \pbb strategy does
not have a MIET as large as the function-based one, it does not suffer
from the same lack of robustness to errors
The design also includes the flexibility of using the surrogate
function~$g$ if it is more convenient or easier to evaluate.  Finally,
Proposition~\ref{prop:barr-lin} also provides a method to calculating
the MIET using the design~\eqref{Trigger:barr-lin} for linear control
systems.  The expression only depends on time (not on the state),
which means that it can be calculated offline.


\section{\PBB Event-Triggered Control Designs for
  Nonlinear Systems}\label{sec:nonlin}

In this section we expand our presentation of the \pbb
event-triggered control design to general nonlinear
systems~\eqref{sys:NL}.  Our starting point is the availability of an
ISS Lyapunov function~\eqref{eq:ISS} in tandem with the feedback
policy~$\kappa$. Unlike the case of linear systems, the evolution of
the Lyapunov function along the trajectories of the closed-loop system
might not be exponentially decaying, and this raises the question of
how to suitably define a performance specification. We do this by
considering a continuously differentiable, time-dependent function
$\map{S(\cdot;x_0)}{\real_+}{\real_+}$, parametrized by the initial
condition~$x_0$,  encoding the desired behavior~as
\begin{equation}\label{eq:requirement}
  V(x(t)) \leq S(t;x_0).
\end{equation}
We use Nagumo theorem~\cite{FB-SM:07} to write an equivalent condition
(assuming that $V(x_0)\leq S(0;x_0)$) to the
requirement~\eqref{eq:requirement} as
\begin{equation}\label{eq:nagumo}
  \frac{d}{dt}V(x(t)) \leq \frac{d}{d t}S(t;x_0) \text{ when } V(x(t))
  = S(t;x_0).
\end{equation}
With this in mind, we seek to identify different types of
performance specification functions $S$ that allow us to establish the
existence of a MIET. In the following, we discuss several classes of
specification functions.

\subsection{Class-$\mathcal K$ Derivative Performance
  Specification}\label{sec:classk}
This class of specification function is an extension of the
exponential decrease of the linear case. 
In particular, note that the desired convergence rate $r$ is limited
in the linear case by the performance~\eqref{eq:LyapExpBound-cont} of
the original continuous-time system. Similarly, in the nonlinear case,
we look at the performance under the continuous-time controller
implementation ($e \equiv 0$ in~\eqref{sys:NL}). Hence, let
$\map{h}{\real_+}{\real_+}$ be such that
\begin{align*}
  \Lie_fV(x,0)\leq -\alpha(\|x\|)<-h(V(x)),
\end{align*}
for all~$x$. In other words, $h$ expects a slower convergence than the natural
convergence of the system with a continuous controller. 

\begin{definition}\longthmtitle{Class-$\mathcal K$ Derivative
    Specification}\label{def:classk}
  {\rm For $\sigma^*\in(0,1)$, let $\map{h}{\real_+}{\real_+}$ be
    locally Lipschitz and class-$\mathcal K$ with $h(V(x)) \leq
    (1-\sigma^*) \alpha(\|x\|)$ for all $x$.  A function
    $\map{S(\cdot;x_0)}{\real_+}{\real_+}$ is a class-$\mathcal K$
    derivative performance specification if it is the unique solution
    to the differential equation
    \begin{align*}
      \dot S = -h(S), \quad S(0;x_0) \geq V(x_0),
    \end{align*}
    for any initial condition~$x_0$.~\hfill$\bullet$ }
\end{definition}

According to this definition, $S$ is strictly decreasing in time and
$\lim_{t\rightarrow\infty}S(t;x_0)=0$ for all $x_0$, and is increasing
in $\|x_0\|$, cf. \cite[Lemma 4.4]{HKK:02} (with a slight abuse of
notation, writing the specification in the form $S(\|x_0\|,t)$ makes
it a class $\mathcal{KL}$ function). Note that the exponential rate
specification is a particular case of Definition~\ref{def:classk} (by
setting $h(s) = -r \, s$).  The following result expands the treatment
in~\cite{PT:07} regarding derivative-based triggers to account for
this notion of performance specification and follows a similar line of
reasoning.

\begin{proposition}\longthmtitle{Derivative-Based Design --
    Class-$\mathcal K$ Derivative}\label{prop:deriv}
  Consider the sample-and-hold nonlinear system~\eqref{sys:NL} with an
  ISS Lyapunov function~\eqref{eq:ISS}. Given a class-$\mathcal K$
  derivative performance specification~$S$ and $\sigma \in
  (0,\sigma^*)$, let $\map{g}{\real^n \times \real^n}{\real}$ be any
  function such that
  \begin{align*}
    \Lie_fV(x,e)\leq g(x,e)\leq (\sigma-1)\alpha(\|x\|)+\gamma(\|e\|).
  \end{align*}
  Define the derivative-based trigger time as
  \begin{equation}\label{Trigger:deriv}
    t^\deriv_{k+1} =\min \setdefb{t \geq t_k}{g(x(t),e(t))+ h(V(x(t)))
      \geq0}.
  \end{equation}
  Under the assumption that $F$, $\kappa$, $\gamma$, $\alpha^{-1} $
  are locally Lipschitz, there exists a MIET $\tau^\deriv_\sigma > 0$
  such that if $V(x(t_k))\leq S(t_k;x_0)$, then $t^\deriv_{k+1}-t_k
  \geq \tau^\deriv_\sigma$.  As a consequence, if the sequence
  $\{t_k\}_{k=0}^\infty$ is defined iteratively via the
  derivative-based trigger, then $V(x(t))\leq S(t;x_0)$ for all time,
  and the origin is globally asymptotically stable.
\end{proposition}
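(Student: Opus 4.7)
The plan is to adapt the standard derivative-based ETC argument to the class-$\mathcal{K}$ derivative specification, proceeding in three steps: invariance of the performance envelope, a uniform MIET ruling out Zeno, and global asymptotic stability.

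First, I would establish $V(x(t)) \leq S(t;x_0)$ by induction on~$k$. On each open interval $(t_k,t_{k+1})$ the trigger has not yet fired, so $g(x,e) + h(V(x)) < 0$; together with $\Lie_f V(x,e) \leq g(x,e)$ this gives $\dot V(x(t)) < -h(V(x(t)))$. Starting from the inductive hypothesis $V(x(t_k)) \leq S(t_k;x_0)$ and using the defining ODE $\dot S = -h(S)$ of the specification, the comparison lemma (Khalil, Lemma 3.4) yields $V(x(t)) \leq S(t;x_0)$ on $[t_k,t_{k+1}]$, closing the induction provided the MIET proven next rules out Zeno.

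Second, for the MIET I would plug the hypothesized upper bounds $g \leq (\sigma-1)\alpha(\|x\|)+\gamma(\|e\|)$ and $h(V(x)) \leq (1-\sigma^*)\alpha(\|x\|)$ into the trigger inequality $g+h\geq 0$ to extract the necessary condition
\[
\gamma(\|e\|) \geq (\sigma^* - \sigma)\,\alpha(\|x\|),
\]
which fails strictly at $t_k$ since $e(t_k)=0$. The invariance just proven confines the trajectory to the compact set $\{\|x\|\leq\underline\alpha^{-1}(S(0;x_0))\}$, on which $F$, $\kappa$, $\gamma$ and $\alpha^{-1}$ admit finite Lipschitz constants, so $\gamma(\|e\|)\leq L_\gamma\|e\|$ and $\alpha(\|x\|)\geq\|x\|/L$, reducing the necessary triggering condition to $\|e\|/\|x\|\geq \beta$ for some explicit $\beta>0$. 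Setting $\phi := \|e\|/\|x\|$ with $\phi(t_k)=0$, and using $\dot e = -\dot x$ together with a linear bound $\|\dot x\| \leq a\|x\|+b\|e\|$ (from local Lipschitzness of $F$ and $\kappa$ with the equilibrium condition $F(0,\kappa(0))=0$), I would derive $\dot\phi \leq (a+b\phi)(1+\phi)$; scalar comparison then supplies a uniform $\tau^\deriv_\sigma > 0$ for $\phi$ to reach $\beta$ starting from zero.

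Finally, GAS follows from iterating the invariance across all intervals: $V(x(t)) \leq S(t;x_0)$ globally, and combined with $\underline\alpha(\|x(t)\|) \leq V(x(t))$ and the class-$\mathcal{KL}$ nature of $(\|x_0\|,t) \mapsto S(t;x_0)$ remarked after Definition~\ref{def:classk}, this yields the standard $\mathcal{KL}$ estimate. The main obstacle I anticipate is the MIET step: the ratio $\phi$ is ill-defined at $x=0$, so I would handle the equilibrium case separately (if $x(t_k)=0$, invariance forces $V\equiv 0$ and hence $x\equiv 0$, making the MIET bound vacuous), and ensure the Lipschitz bound on $\|\dot x\|$ holds uniformly on the compact set independently of how close the trajectory comes to the origin.
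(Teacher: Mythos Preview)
Your proposal is correct and follows essentially the same route as the paper: comparison lemma for the performance envelope, reduction of the trigger condition via the bounds on $g$ and $h$ to a threshold on $\|e\|/\|x\|$ using the Lipschitz constants of $\gamma$ and $\alpha^{-1}$ on the invariant compact sublevel set, and then a uniform time-to-threshold bound for that ratio. The only cosmetic difference is that the paper packages your differential inequality $\dot\phi \leq L_f(1+\phi)^2$ as a direct citation of the sample-and-hold error bound $\|e\|\leq \phi(t-t_k)\|x\|$ from~\cite{PT:07} (stated as Lemma~\ref{lem:errBnd} in the appendix), yielding the explicit MIET $\tau^\deriv_\sigma = 1/(L_f D + L_f)$ with $D = L_{\alpha^{-1}}L_\gamma/(\sigma^*-\sigma)$, rather than re-deriving it.
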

\begin{proof}
  The trigger design directly enforces $ g(x(t),e(t))< -h(V(x(t))$ for
  $t \in [t_k,t^\deriv_{k+1})$. Therefore, 
  \begin{align*}
    \frac{d}{dt}V(x(t)) = \Lie_f V(x(t),e(t)) \leq -h(V(x(t))).
  \end{align*}
  Consequently, if $ V(x(t_k))\leq S(t;x_0)$, one can guarantee
  $V(x(t)) \leq S(t;x_0)$ for all $t\in[t_k,t^\deriv_{k+1}]$ via the
  Comparison Lemma~\cite[Lemma 3.4]{HKK:02}.  Next, we prove the
  existence of a MIET. Because the sublevel set $\setdef{x \in
    \real^n}{V(x)\leq S(0;x_0)}$) is forward invariant and compact,
  $\|e\|=\|x-x_k\|$ must be bounded by some constant $E>0$ and hence
  the error remains in the compact set $\setdef{e}{\|e\|\leq E}$. On
  these compact sets, let $L_\gamma$ and $L_{\alpha^{-1}}$ denote the
  Lipschitz constants for the functions $\gamma$ and $\alpha^{-1} $,
  respectively. Then,
  \begin{align*}
    t^\deriv_{k+1} &= \min \setdefb{t \ge t_k}{g(x,e)+h(V(x))\geq0}
    \\
    &\geq  \min \setdefb{t \ge
      t_k}{g(x,e)+(1-\sigma^*)\alpha(\|x\|)\geq0}
    \\
        &\geq \min \setdefb{t \ge
      t_k}{(\sigma-\sigma^*)\alpha(\|x\|)+\gamma(\|e\|)=0}
    \\
    &\geq \min \setdefb{t \ge
      t_k}{\frac{L_\gamma}{\sigma^*-\sigma}\|e\|=\alpha(\|x\|)}
    \\
    &\geq \min \setdefb{t \ge
      t_k}{\frac{L_{\alpha^{-1}}L_\gamma}{\sigma^*-\sigma}\|e\|=\|x\|}
    \\
    &=\min \setdefb{t \ge t_k}{\frac{\|e\|}{\|x\|}= D^{-1}} ,
  \end{align*}
  where $D = \frac{L_{\alpha^{-1}}L_\gamma}{\sigma^*-\sigma}$. 
  Using Lemma~\ref{lem:errBnd}, the time at which the condition in the
  last equation is met is lower bounded by $t_k + \frac{1}{L_fD+L_f}$,
  where $L_f$ is the Lipschitz constant for $f$ with respect to
  $(x,e)$ (which exists because $F$ and $\kappa$ are locally
  Lipschitz). This establishes the existence of a positive MIET bound,
  ruling out the possibility of Zeno behavior in the sequence
  $\{t_k\}_{k=0}^\infty$. Finally, asymptotic stability follows from
  the fact that $S$ is strictly decreasing and $\lim_{t\rightarrow
    \infty}S(t;x_0) = 0$, concluding the proof.
\end{proof}

Next, we build on the ideas presented in Sections~\ref{sec:problem}
and~\ref{sec:lin} to introduce the \pbb trigger
design~\eqref{Trigger:barr} for the nonlinear case.
The proposed design is based on enforcing 
the condition~\eqref{eq:nagumo} to ensure the performance
specification is met. In doing so, we take advantage of the
\emph{performance residual} $S(t;x_0)-V(x(t))$ to avoid
overconstraining the evolution of the Lyapunov certificate $V$ when
$V(x(t))<S(t;x_0)$.

\begin{proposition}\longthmtitle{\PBB
    Design -- Class-$\mathcal K$ Derivative}\label{prop:barr}
  Consider the sample-and-hold nonlinear system~\eqref{sys:NL} with an
  ISS Lyapunov function~\eqref{eq:ISS}. Given a class-$\mathcal K$
  derivative performance specification $S$ and $\sigma \in
  (0,\sigma^*)$, let $g$ be as in Proposition~\ref{prop:deriv} and let
  $\beta$ be any $\mathcal K_\infty$ function on $[0,\infty)$. Define
  the \pbb trigger time as
  \begin{multline}
    t^\barr_{k+1} =\min \setdefb{t \geq t_k}{g(x(t),e(t))+ h( V(x(t)))
      \\
      \geq \beta\big(S(t;x_0)- V(x(t))\big)}.\label{Trigger:barr}
  \end{multline}
  Under the assumption that $F$, $\kappa$, $\gamma$, $\alpha^{-1}$ are
  locally Lipschitz, there exists a MIET $\tau^\barr_\sigma>0$ such
  that if $V(x(t_k))\leq S(t;x_0)$,
  then $t^\barr_{k+1}-t_k \geq \tau^\barr_\sigma$. As a consequence,
  if the sequence $\{t_k\}_{k=0}^\infty$ is defined iteratively via
  the \pbb trigger, then $V(x(t))\leq S(t_k;x_0)$ for
  all time, and the origin is globally asymptotically stable.
\end{proposition}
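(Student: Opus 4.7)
The plan is to mirror the proof of Proposition~\ref{prop:deriv}, exploiting the fact that the \pbb trigger is a relaxation of the derivative-based one. First I would establish that~\eqref{Trigger:barr} enforces the specification $V(x(t))\leq S(t;x_0)$ between updates via a Nagumo-type contradiction argument. Next I would transfer the MIET from Proposition~\ref{prop:deriv} using the monotonicity of the two triggers in the prescribed threshold. Finally, global asymptotic stability would follow from the class-$\mathcal{KL}$ character of the specification noted after Definition~\ref{def:classk}.

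For the performance bound on $[t_k,t^\barr_{k+1})$, the trigger has not yet fired, so
\begin{equation*}
g(x(t),e(t))+h(V(x(t))) < \beta\big(S(t;x_0)-V(x(t))\big).
\end{equation*}
Combined with $\Lie_f V(x,e)\leq g(x,e)$ this yields $\tfrac{d}{dt}V(x(t)) < -h(V(x(t))) + \beta(S(t;x_0)-V(x(t)))$. Assume $V(x(t_k))\leq S(t_k;x_0)$ and suppose, toward contradiction, that there exists a first time $t^*\in(t_k,t^\barr_{k+1})$ with $V(x(t^*))=S(t^*;x_0)$. At that instant $\beta(0)=0$, hence $\tfrac{d}{dt}V(x(t^*)) < -h(S(t^*;x_0)) = \tfrac{d}{dt}S(t^*;x_0)$, so $S-V$ is strictly increasing at $t^*$, contradicting that $t^*$ is the first crossing. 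Thus $V(x(t))\leq S(t;x_0)$ on the closed interval $[t_k,t^\barr_{k+1}]$ by continuity.

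For the MIET, note that $\beta$ is class-$\mathcal{K}_\infty$ and the invariance just established gives $\beta(S(t;x_0)-V(x(t)))\geq 0$ throughout $[t_k,t^\barr_{k+1}]$. Therefore, whenever the \pbb condition is met, the derivative-based condition $g(x,e)+h(V(x))\geq 0$ in~\eqref{Trigger:deriv} is also met, so $t^\barr_{k+1}\geq t^\deriv_{k+1}$, and one may take $\tau^\barr_\sigma:=\tau^\deriv_\sigma>0$ from Proposition~\ref{prop:deriv}. Iterating over $k$ with base case $V(x_0)\leq S(0;x_0)$ yields $V(x(t))\leq S(t;x_0)$ for all $t\geq 0$; since $S(t;x_0)\to 0$, the bound~\eqref{eq:ISSLyap} forces $\|x(t)\|\to 0$, and the $\mathcal{KL}$ character of $S$ promotes this to global asymptotic stability.

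The main obstacle is the Nagumo-type contradiction step, which is subtle because the trigger only supplies a strict inequality on the open interval between updates: one must use continuity to extend the bound to the closed interval, and one relies critically on $\beta(0)=0$ (from $\beta$ being class-$\mathcal{K}_\infty$) to recover the comparison $\tfrac{d}{dt}V<\tfrac{d}{dt}S$ exactly at a hypothetical first crossing. The remaining pieces are largely bookkeeping transfers from the derivative-based case.
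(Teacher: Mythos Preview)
Your proposal is correct and follows essentially the same route as the paper: the paper also derives the strict inequality on $[t_k,t^\barr_{k+1})$, uses the Nagumo condition~\eqref{eq:nagumo} (your first-crossing contradiction is just this argument made explicit, exploiting $\beta(0)=0$) to get $V(x(t))\leq S(t;x_0)$, then observes that the nonnegativity of the performance residual forces $t^\barr_{k+1}\geq t^\deriv_{k+1}$, so $\tau^\deriv_\sigma$ from Proposition~\ref{prop:deriv} serves as the MIET. The only difference is presentational: the paper invokes Nagumo's theorem directly, whereas you spell out the contradiction at a hypothetical first crossing.
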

\begin{proof}
  The trigger design directly enforces
  \begin{equation}\label{eq:perf_barr}
    g(x(t),e(t)) +h(V(x(t))) < \beta\big(S(t;x_0) - V(x(t))\big) ,
  \end{equation}
  for $t \in [t_k,t^\barr_{k+1})$. Thus, when $S(t;x_0) = V(x(t))$, we
  find $ g(x(t),e(t)) +h(S(t;x_0))< 0 $, and hence $
  \Lie_fV(x(t),e(t)) < -h(S(t;x_0))$ from the properties of~$g$. Since
  $S$ is a derivative performance specification, it follows that
  \begin{align*}
    \frac{d}{dt}V(x(t)) &< \frac{d}{d t}S(t;x_0),
  \end{align*}
  implying~\eqref{eq:nagumo}.  Consequently, $V(x(t)) \leq S(t;x_0)$
  for all $t\in[t_k,t^\barr_{k+1})$.
  We can also use this fact to deduce
  \begin{align*}
    t^\barr_{k+1} &\geq \min \setdefb{t \ge t_k}{g(x(t),e(t))+h(V(x(t)))=0},
  \end{align*}
  implying that the \pbb trigger time must occur after
  the derivative-based trigger one~\eqref{Trigger:deriv}. Thus,
  $\tau^\deriv_\sigma$ from Proposition~\ref{prop:deriv} is a valid MIET for
  the \pbb trigger as well, ruling out the possibility
  of Zeno behavior in $\{t_k\}_{k=0}^\infty$. Finally, asymptotic
  stability follows from the properties of~$S$.
\end{proof}

Note that the function~$\beta$ in Proposition~\ref{prop:barr}
restricts the speed of evolution of the Lyapunov certificate~$V$ when
$V(x(t))<S(t;x_0)$ as a function of the performance residual.

\begin{remark}\longthmtitle{Comparison with Derivative-Based
    Approach: Longer Inter-Event Times}\label{remark:IncIntTime} {\rm
    As pointed out by Propositions~\ref{prop:deriv}
    and~\ref{prop:barr}, both the derivative- and \pbb
    approaches meet the performance specification defined
    by~$S$. However, since the performance residual on the right-hand
    side of~\eqref{Trigger:barr} always remains greater than zero by
    design, the \pbb approach, for a given system state,
    has a longer inter-event time than the derivative-based one, and
    is therefore less conservative.  In general, it is challenging to
    provide an explicit bound between the respective MIETs due to the
    generality of the system dynamics and the performance
    requirement. We show later in Section~\ref{sec:exp} that in the
    case of exponential performance specification this difference in
    MIETs can be quantified analytically.~\hfill $\bullet$}
\end{remark}

\begin{remark}\longthmtitle{Comparison with Function-Based Approach:
    Robustness to Input 
    Disturbances}\label{remark:Robustness}
  {\rm 
    A purely function-based design would correspond
    to~\eqref{Trigger:barr} with the left-hand side substituted by
    zero. Note that the error term does not show up explicitly in such
    design, in contrast to the \pbb approach.  Much like
    how one can use the ISS notion to deal with disturbances, the
    \pbb design allows for the analysis and
    mitigation of input disturbances.
    This is the intention of the presence of the parameter~$\sigma$ in
    the definition of $g$, that reserves a part of the negativity of
    the Lyapunov function decay.~\hfill $\bullet$}
\end{remark}


\begin{remark}\longthmtitle{Connection with Dynamic Trigger
    Design}\label{re:dynamic-trigger}
  {\rm We note that dynamic triggering can be interpreted as a
    particular case of the \pbb trigger design, where the
    performance function is specified in an online fashion. We
    elaborate on this point here.  Formally, and with the same
    notation employed in Proposition~\ref{prop:barr}, the dynamic
    trigger~\cite{AG:15} would take the form
    \begin{subequations}\label{eq:dynamic}
      \begin{align}
        t^\dyn_{k+1} & = \min\setdefb{t\geq t_k}{\theta g(x(t),e(t))
          \geq \eta(t)}, \label{Trigger:dynamic}
        \end{align}
        for~$\theta>0$, where the variable $\eta$ follows the dynamics
        \begin{align}
        \dot \eta & = -\iota(\eta) -
        g(x,e) \label{eq:dynamic_storage}
      \end{align}
    \end{subequations}
    with a locally Lipschitz class-$\mathcal K_\infty$
    function~$\iota$.  The basic idea is to store the decrease of $V$
    in the variable $\eta$ through~\eqref{eq:dynamic_storage} and use
    it to increase the inter-event times
    in~\eqref{Trigger:dynamic}. The term $\iota(\eta)$ represents a
    decay in the stored amount, ensuring that the system as a whole
    loses total ``energy'' over time.
    
    Interestingly, the dynamic design~\eqref{eq:dynamic} can be
    interpreted from the perspective of \pbb ETC.
    Selecting the performance specification function $S (t;x_0) = \eta
    (t) + V (x(t; x_0))$, one can see that the
    design~\eqref{eq:dynamic} ensures
    \begin{align*}
      \frac{d}{dt} V(x(t;x_0)) - \frac{d}{dt}S(t;x_0) <
      \beta(S(t;x_0)-V(x(t; x_0))
    \end{align*}
    with $\beta(\eta)=\iota(\eta)+\eta /\theta$ (note the parallelism
    with the \pbb design~\eqref{Trigger:barr}),
    implying~\eqref{eq:nagumo} is satisfied.  Note that this
    performance specification~$S$ is not known a priori and is instead
    determined in an online fashion, tailored to the concrete initial
    condition of the system trajectory. In particular, this means that
    the explicit performance guarantee of the design is difficult to
    obtain unless additional assumptions are made on the dynamics.  A
    final observation is that errors in the evaluation of the decrease
    of $V$ might jeopardize the convergence properties of dynamic
    triggering, whereas the evaluation of the performance residual in
    a feedback fashion characteristic of the \pbb ETC
    approach makes it naturally robust to errors.~\hfill $\bullet$ }
\end{remark}

\subsection{Exponential Performance Specification}\label{sec:exp}

Here we discuss the exponential performance specification. This is a
subfamily of the class-$\mathcal K$ derivative performance
specifications in Section~\ref{sec:classk} for which an explicit
analysis of the performance residual leads us to an improved MIET with
respect to the derivative-based approach.

In this case, in lieu of the conditions~\eqref{eq:ISS} for the ISS
Lyapunov function $\map{V}{\real^n}{\real}$, assume the following
stronger set of conditions hold: there exist positive constants $c_1$,
$c_2$, $c_3$, and $c_4$ such that
\begin{subequations}\label{eq:ExpLyap}
  \begin{align}
    c_1\|x\|^2 \leq V(x) & \leq c_2\|x\|^2, \label{eq:ExpLyapA}
    \\
    \frac{dV}{dx} f(x,0) & \leq -c_3\|x\|^2, \label{eq:ExpLyapB}
    \\
    \left\|\frac{dV}{dx}\right\| & \leq c_4\|x\| , \label{eq:ExpLyapC}
  \end{align}
\end{subequations}
for all $x \in \real^n$. Under the additional assumption that $F$ and
$\kappa$ are globally Lipschitz, and using Young's
inequality~\cite{GHH-JEL-GP:52}, the following inequality holds for
all $(x,e)$,
\begin{align}
  \Lie_fV(x,e) = \frac{dV}{dx} f(x,e) &\leq
  -c_3\|x\|^2+c_4L_f\|x\|\|e\| \notag
  \\
  &\leq -c_\alpha\|x\|^2+c_\gamma\|e\|^2  , \label{eq:expISS}
\end{align}
for some positive constants $L_f,c_\alpha$, and $c_\gamma$. Notice
that the functions $\underline \alpha$, $\overline \alpha$, $\alpha$,
and $\gamma$ for this ISS Lyapunov function are defined as quadratic
functions with constants $c_1$, $c_2$, $c_\alpha$, $c_\gamma$,
respectively. Note that in the absence of error, the value of $V$
converges exponentially. Hence, we consider the exponential
performance specification $S(t;x_0) = V(x_0)\exp(-rt)$ with
$r<c_\alpha/c_2$, which is of class-$\mathcal K$ since it is the
unique solution to $\dot S = -rS$, cf. Definition~\ref{def:classk}.

The next result provides an expression for the MIET for the
\pbb trigger design~\eqref{Trigger:barr} and shows it is
strictly larger than the MIET $\tau^\deriv_\sigma$ of the
derivative-based trigger design.

\begin{proposition}\longthmtitle{\PBB Design --
    Exponential Performance}\label{prop:exp}
  Consider the sample-and-hold nonlinear system~\eqref{sys:NL} with a
  Lyapunov function~\eqref{eq:ExpLyap}. Given an exponential
  performance specification $S$ and $\sigma \in
  (0,1-\frac{rc_2}{c_\alpha})$, let $g$ be as in
  Proposition~\ref{prop:deriv}, and $\beta(z) = c_\beta z$ with a
  positive $c_\beta$.
  Define
  \begin{multline}\label{eq:tau-exp}
    \tau^{\exp}_\sigma := \min \setdefB{\tau\geq
      0}{(\xi(\tau)+r)\exp\left(\int_{0}^\tau
        \xi(s)ds\right)
      \\
      = c_\beta \left(\exp (-r\tau)-\exp\left(\int_{0}^\tau
          \xi(s)ds\right)\right)}
  \end{multline}
  where
  \begin{align*}
    \tau^\deriv_\sigma &= \xi^{-1}(-r):= \frac{\sqrt{((1-\sigma)c_\alpha-r)
        /c_\gamma}}{L_f+L_f\sqrt{((1-\sigma)c_\alpha-r)/c_\gamma}},
    \\
    \xi(\tau) &= \begin{cases}
      ((\sigma-1)c_\alpha+c_\gamma\phi(\tau)^2)/c_2 & 0\leq \tau<
      \tau^*
      \\
      ((\sigma-1)c_\alpha+c_\gamma\phi(\tau)^2)/c_1 & \tau^*\leq \tau
    \end{cases},
    \\
    \phi(\tau) &= \frac{L_f \tau}{1-L_f \tau}, ~\tau^* = \xi^{-1}(0)
    :=\frac{\sqrt{(1-\sigma)c_\alpha
        /c_\gamma}}{L_f+L_f\sqrt{(1-\sigma)c_\alpha/c_\gamma}} .
  \end{align*}
  Under the assumption that $F$ and $\kappa$ are globally Lipschitz,
  $\tau^{\exp}_\sigma$ is a MIET such that if $V(x(t_k))\leq
  V(x_0)\exp(-rt_k)$, then $t^{\barr}_{k+1}-t_k \geq
  \tau^{\exp}_\sigma>\tau^\deriv_\sigma$.  As a consequence, if the
  trigger sequence $\{t_k\}_{k=0}^\infty$ is defined iteratively with
  the exponential \pbb trigger~\eqref{Trigger:barr}, then
  $V(x(t))\leq V(x_0)\exp(-rt)$ for all time, and the origin is
  globally exponentially stable.
\end{proposition}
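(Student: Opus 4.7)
The plan is to prove the three assertions in turn: the performance bound, the MIET value $\tau^{\exp}_\sigma$, and its strict improvement over $\tau^\deriv_\sigma$. The first comes almost for free. Since $S(t;x_0)=V(x_0)\exp(-rt)$ is the unique solution of $\dot S=-rS$ with $h(s)=rs$, and since $rV(x)\leq rc_2\|x\|^2\leq (1-\sigma^*)c_\alpha\|x\|^2$ with $\sigma^*=1-rc_2/c_\alpha$, this $S$ qualifies as a class-$\mathcal K$ derivative specification per Definition~\ref{def:classk}. Proposition~\ref{prop:barr} then delivers $V(x(t))\leq V(x_0)\exp(-rt)$ for all $t$, and combining with~\eqref{eq:ExpLyapA} yields $\|x(t)\|\leq\sqrt{c_2/c_1}\,\|x_0\|\exp(-rt/2)$, i.e., global exponential stability.

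The heart of the argument is the MIET analysis. I would first recall the standard worst-case bound on the relative state error: since $\dot e=-f(x,e)$ with $e(t_k)=0$ and $f$ globally Lipschitz, a differential inequality argument (parallel to Lemma~\ref{lem:errBnd}) gives $\|e(t)\|/\|x(t)\|\leq \phi(\tau)$, where $\tau=t-t_k$. Substituting into the quadratic upper bound on $g$ yields $g(x,e)\leq\bigl((\sigma-1)c_\alpha+c_\gamma\phi(\tau)^2\bigr)\|x\|^2$. To convert the right-hand side into a multiple of $V(x)$, I split into the two cases that motivate the piecewise definition of $\xi$: when $(\sigma-1)c_\alpha+c_\gamma\phi(\tau)^2<0$ (i.e., $\tau<\tau^*$) the tightest bound uses $\|x\|^2\geq V(x)/c_2$, and otherwise ($\tau\geq\tau^*$) uses $\|x\|^2\leq V(x)/c_1$. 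Either way this yields $g(x,e)\leq \xi(\tau)V(x)$; since $\Lie_fV\leq g$, Gr\"onwall's inequality then gives $V(x(t_k+\tau))\leq V(x(t_k))\exp\bigl(\int_0^\tau\xi(s)\,ds\bigr)\leq V(x_0)\exp(-rt_k)\exp\bigl(\int_0^\tau\xi(s)\,ds\bigr)$, using the standing hypothesis at the last step.

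With these ingredients, bounding the \pbb trigger is straightforward. Rewriting the trigger condition as $g+(r+c_\beta)V\geq c_\beta V(x_0)\exp(-r(t_k+\tau))$ and substituting the worst-case upper bounds on $g$ and $V$, the condition cannot be met so long as
\[
(\xi(\tau)+r+c_\beta)\exp\!\Bigl(\int_0^\tau\!\xi(s)\,ds\Bigr)<c_\beta\exp(-r\tau),
\]
which after rearrangement is exactly the strict version of the defining equation of $\tau^{\exp}_\sigma$ in~\eqref{eq:tau-exp}. Hence $t^{\barr}_{k+1}-t_k\geq\tau^{\exp}_\sigma$. The strict inequality $\tau^{\exp}_\sigma>\tau^\deriv_\sigma$ is then immediate: at $\tau=\tau^\deriv_\sigma=\xi^{-1}(-r)$ the left-hand side of~\eqref{eq:tau-exp} vanishes while monotonicity of $\xi$ gives $\int_0^{\tau^\deriv_\sigma}\xi(s)\,ds<-r\tau^\deriv_\sigma$, making the right-hand side strictly positive; continuity in $\tau$ then forces the crossing to occur later than $\tau^\deriv_\sigma$.

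The main obstacle will be making the piecewise bound $g\leq\xi(\tau)V$ rigorous across the transition at $\tau^*$, and verifying that the worst-case trajectory against which I lower-bound the inter-event time simultaneously maximizes the trigger left-hand side and respects the Lyapunov upper bound. Once those details are settled, the remainder reduces to manipulations of inequalities already in hand.
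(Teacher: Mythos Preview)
Your proposal is correct and follows essentially the same route as the paper: invoke Proposition~\ref{prop:barr} for the performance and stability claims, then combine the error bound $\|e\|\leq\phi(\tau)\|x\|$ with the sandwich~\eqref{eq:ExpLyapA} to obtain $g\leq\xi(\tau)V$, apply Gr\"onwall to upper-bound $V(x(t))$ by $V(x_k)\exp\bigl(\int_0^\tau\xi\bigr)$, and substitute both into the trigger inequality to extract the state-free condition~\eqref{eq:tau-exp}. Your justification of the strict inequality $\tau^{\exp}_\sigma>\tau^\deriv_\sigma$ via $\int_0^{\tau^\deriv_\sigma}\xi(s)\,ds<-r\tau^\deriv_\sigma$ is in fact slightly more explicit than the paper's, which simply asserts that the right-hand side of~\eqref{eq:tau-exp} is positive; the worries you flag about the piecewise transition at~$\tau^*$ and the simultaneous worst-case are non-issues, since the sign of $(\sigma-1)c_\alpha+c_\gamma\phi(\tau)^2$ is exactly what selects the correct constant $c_1$ or $c_2$, and the substitution is monotone in the required direction once one splits on the sign of $\xi(\tau)+r$.
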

\begin{proof}
  The statements on performance and stability follow with the same
  arguments used in the proof of Proposition~\ref{prop:barr}. Here, we
  only establish the MIET expression given for
  $\tau^{\exp}_\sigma$. First, we use \eqref{eq:ExpLyapA} and
  Lemma~\ref{lem:errBnd} to find
  \begin{align*}
    g(x(t),e(t)) &\leq
    ((\sigma-1)c_\alpha+c_\gamma\phi(t-t_k)^2)\|x\|^2
    \\
    &\leq \xi(t-t_k)V(x) ,
  \end{align*}
  where we have used that $\phi(\tau^*)^2 =
  (1-\sigma)c_\alpha/c_\gamma$. This gives the bound
  \begin{multline*}
    t^{\barr}_{k+1} \geq \min \setdefb{t\geq t_k}{(\xi(t-t_k)+r)V(x(t))
      \\\geq c_\beta(V(x_0)\exp (-rt)-V(x(t)))}.
  \end{multline*}
  In addition, we can bound the Lyapunov function along the trajectory
  using the differential form of Gronwall's inequality~\cite[Lemma
  A.1]{HKK:02} as
  \begin{equation}\label{eq:gronLyap}
    V(x(t)) \leq V(x_k) \exp \Big(\int_{t_k}^t \xi(s-t_k)ds \Big).
  \end{equation}
  This helps us isolate the state component, which in turn allows us
  to bound the trigger time with only the time variable as follows
    \begin{multline*}
    t^{\barr}_{k+1} \geq \min \setdefB{t\geq
      t_k}{(\xi(t-t_k)+r)\exp\left(\int_{t_k}^t \xi(s)ds\right)
      \\ \geq c_\beta\left(\exp (-r(t-t_k))-\exp\left(\int_{t_k}^t
          \xi(s-t_k)ds\right)\right)}.
  \end{multline*}
  With the change of variables $\tau = t-t_k$, and using continuity,
  the condition defining the set is as in~\eqref{eq:tau-exp}. Next,
  because $\xi$ is strictly increasing and $\xi(\tau^\deriv_\sigma) =
  -r$, the left-hand side of the condition is nonpositive for $\tau
  \leq \tau^\deriv_\sigma$. At the same time, the right-hand side of
  the condition must always be positive. Hence, the condition must be
  met at $\tau^{\exp}_\sigma> \tau^\deriv_\sigma$, concluding the
  proof.
\end{proof}

Note that the expression~\eqref{eq:tau-exp} in
Proposition~\ref{prop:exp} for the MIET of the \pbb design with
exponential specification does not depend on the state, and can
therefore be calculated a priori, before the actual implementation of
the controller.  We take advantage of the ability to quantify the
benefits of the \pbb approach for exponential specifications when
discussing its application to network systems in our forthcoming
discussion.

\section{\PBB Triggering for Network
  Systems}\label{sec:dist}

In this section we discuss the application of the \pbb
triggering approach to the design of distributed triggers for network
systems. Specifically, we consider exponential performance
specifications and take advantage of the additional flexibility
provided by the performance residual to ensure the existence of
a~MIET.

Consider a network of $N$ agents whose interconnection is represented
by a connected undirected graph $\mathcal G = (\until N,\mathcal
E)$. By this, we mean that each agent can only communicate with its
neighbors, and hence has access to limited information about the
system.  We make the assumption that the Lyapunov function
satisfying~\eqref{eq:ExpLyap} can be expressed as an aggregate
\begin{align*}
  V(x) = \sum_{i=1}^N V_i(x_{\NN_i}) ,
\end{align*}
with each function $V_i$ depending on the local information available
to agent~$i$.  We assume each $V_i$ to be continuously differentiable
with Lipschitz gradient.  Our goal is to design distributed triggers
that can be evaluated by individual agents with the information
available to them.

\subsection{Challenges for ETC in Network
  Systems}\label{sec:challenges-ETC-network}
Here we describe the challenges in transcribing the derivative-based
trigger approach to network systems.  The direct transcription
of~\eqref{Trigger:deriv} to the network setting would result in a
centralized trigger that requires global information to be
evaluated. Making use of the aggregate decomposition of $V$, one can
instead define
\begin{multline}\label{eq:naive-distributed}
  t_{k+1} = \min \setdefb{t\geq t_k}{ \exists i\in \until N~\ni
    \\
    (\sigma-1)c_\alpha\|x_i(t)\|^2 + c_\gamma\|e_i(t)\|^2 +r
    V_i(x_{\NN_i}(t))\geq 0}.
\end{multline}
Note the slight abuse of notation here, where $x_i$ and $e_i$ now
refer to the states associated with agent $i$, rather than the $i$-th
component of vectors $x$ and~$e$, resp.  This trigger corresponds to
partitioning~\eqref{Trigger:deriv} across the network into multiple
triggers, one per agent, that can be individually evaluated with local
information.  Note that the design means that when an agent triggers,
a controller update request is sent network-wide.  This relies on the
observation that such messages, which do not require any state
information, can be easily propagated through the network.  The design
is more conservative than the centralized one and, as a consequence,
results in shorter inter-event times for an arbitrary network
state. In fact, this type of distributed trigger schemes can suffer
from Zeno behavior, see
e.g.,~\cite{WPMHH-KHJ-PT:12,MCFD-WPMH:12,PT-NC:14,DPB-WPMHH:14}.
A common practice to address this is to explicitly incorporate a MIET
at the design stage, a process known as time regularization, see
e.g.,~\cite{DPB-VSD-WPMHH:16,MMJ-PT:10,PT-NC:14}.  For instance, with
a slight modification to suit our context,~\cite{MMJ-PT:10} proposes
the following
\begin{align}\label{Trigger:dist-Paulo}
  &t_{k+1}  = \min \setdefb{t\geq t_k+\tau^{\deriv}_\sigma}{ \exists
    i\in \until N~\ni
    \\
    & \; \; (\sigma-1)c_\alpha\|x_i(t)\|^2 + c_\gamma\|e_i(t)\|^2 +r
    V_i(x_{\NN_i}(t))\geq b_i(t_k)}, \notag
\end{align}
where $b\in \real^n$ is a budget variable satisfying $\ones^\top b =
0$, which we discuss below.  Time regularization discards the
possibility of Zeno behavior by forcing the inter-event time to be
above the MIET known from the centralized design.  The design builds
on the fact that, from the analysis in Section~\ref{sec:nonlin}, we
know controller updates are not necessary for $\tau^{\deriv}_\sigma$
seconds after the last update in order to meet the performance
specification.  Consequently, agents can ignore the trigger conditions
for this amount of time and only start enforcing them thereafter.

However, note that time regularization does not change the fact that
the error $\|e_i\|$ might have already surpassed the level at which
the trigger would occur as soon as the trigger condition starts
getting monitored, see e.g.,~\cite{DPB-WPMHH:14}.  The variable $b$
seeks to address this by re-balancing the budget that each agent has
in its trigger condition, allowing for the possibility of allocating
at the triggering times some budget from a node where the condition
has not been violated to another node where it has (in order to have
the latter not trigger immediately next time once
$\tau^{\deriv}_\sigma$ seconds have elapsed).
Among the potential disadvantages of the
design~\eqref{Trigger:dist-Paulo} from a network perspective, we point
out the following:
\begin{enumerate}
\item the computation of the MIET~$\tau^{\deriv}_\sigma$ can be
  challenging and requires the execution of a dedicated distributed
  algorithm prior to the controller implementation.  Moreover, the
  value obtained may turn out to be too conservative, making the
  trigger occur more frequently than necessary;
\item the proposed scheme requires a central entity, albeit only at
  each triggering time, to calculate and assign budgets to all the
  agents;
\item without further assumptions on the nonlinear system, the
  evolution of the trigger condition cannot be predicted, and
  consequently there is no guarantee that the selected budgets~$b$
  will successfully extend the inter-event time.
\end{enumerate}
Our proposed method addresses these problems by designing a trigger
that intrinsically exhibits a MIET and relying on distributed
computation and communication among the agents to calculate their
budgets.

\subsection{Intrisically Zeno-Free Distributed ETC Design}

We use two different elements to propose a distributed trigger scheme:
dynamic average consensus algorithm and the \pbb trigger
design. We approach the Zeno problem by attacking directly its root
cause in distributed settings: partial information of the system
states is insufficient to inform agents of system's overall
performance.
For this reason, our distributed trigger design makes use of dynamic
average consensus algorithm to estimate, with some tracking errors,
the global terms in the centralized version of the trigger.  Doing so
transforms the problem into ascertaining how well the trigger design
can tolerate errors.  This is where we leverage the additional
flexibility provided by the \pbb approach over the
derivative-based one regarding handling of the tracking
errors. Particularly, as we will show later in the analysis of our
design, the performance residual term offered by \pbb ETC
plays a key role in ruling out Zeno behavior.

We begin by defining some notation functions for compactness of
presentation. Let
\begin{align*}
  \mathcal W^{x}(x) &= (\sigma-1)c_\alpha\|x\|^2 +(r+c_\beta)V(x),
  \\
  \mathcal W^{xe}(x,e) &= \mathcal W^{x}(x) + c_\gamma\|e\|^2.
\end{align*}
These functions can be decomposed as sums of the following functions,
respectively,
\begin{align*}
  \Wc^x_i(x_{\mathcal N_i}) &= (\sigma-1)c_\alpha
  \|x_i\|^2+(r+c_\beta)V_i(x_{\mathcal N_i}) ,
  \\
  \Wc^{xe}_i(x_{\mathcal N_i},e_i)&= \Wc^x_i(x_{\mathcal N_i})+
  c_\gamma\|e_i\|^2 .
\end{align*}    
For convenience, we let $W^{x}$ and $W^{xe}$ be vector-valued
functions with components $W^x_i = \Wc^x_i $ and $W^{xe}_i =
\Wc^{xe}_i $, respectively. We omit the dependency on $x_{\mathcal
  N_i}$ and $e_i$ when it is clear from the context. Notice that
$\ones^\top W^{x} = \mathcal W^{x}$ and $\ones^\top W^{xe} = \mathcal
W^{xe}$. The centralized \pbb trigger
design~\eqref{Trigger:barr} can be rewritten compactly as
\begin{equation}\label{Trigger:cent}
  t^{\exp}_{k+1} = \min \setdefb{t\geq t_k}{\mathcal
    W^{xe}(t)= c_\beta V(x_0)\exp(-rt)}. 
\end{equation}
This trigger has a MIET, cf. Proposition~\ref{prop:exp}, but the
direct computation of $\Wc^{xe}$ requires global information.
However, given the aggregate decomposition $\ones^\top W^{xe} =
\mathcal W^{xe}$ and the fact that agent $i$ knows $\Wc^{xe}_i$, a
dynamic average consensus algorithm enables the agents to estimate the
average $\Wc^{xe}/N$. This leads to the following trigger design,
\begin{subequations}\label{Trigger:dist}
  \begin{align}
    t_{k+1} & = \min \setdefb{t\geq t_k}{\exists i\in \until N~\ni
      \notag
      \\
      & \quad \qquad a_i(t) =c_\beta V(x_0)\exp(-rt)/N}, \label{Trigger:dist_condition}
    \\
    \dot a & = \dot W^{xe} -\rho_a \mathbf{L}a, \label{sys:dyncon}
  \end{align}
  where $\rho_a > 0$ and $\mathbf{L}\in\real^{N\times N}$ is the graph's
  Laplacian.  With this formulation, we denote the tracking error by
  $\epsilon_a := a-\ones\ones^\top W^{xe}/N$.  In order for the dynamic
  average consensus to track the right variable, it is crucial to
  initialize $a$ so that $\ones^\top \epsilon_a = 0$.
  As such, we assume that $a(0)$ is so that $\epsilon_a(0) = 0$ at the
  initial time $t=0$. Since the tracking error's mean $\ones^\top
  \epsilon_a$ is conserved along the dynamics~\eqref{sys:dyncon}, this
  ensures $\ones^\top \epsilon_a = 0$ until the next triggering time.
  However, the value of $\mathcal W^{xe}$ jumps to $\mathcal W^{x}$ at each trigger time
  $t_k$ due to $e$ being reset to zero, and therefore the average
  estimate $a$ must be reinitialized at each trigger time $t_k$ to
  keep the tracking error's mean zero. To do this, we use another
  dynamic average consensus to keep track of $\mathcal W^{x}$ as
  \begin{equation}\label{sys:dyncon_init}
    \dot z = \dot W^x - \rho_z\mathbf{L}z
  \end{equation}
  where $\rho_z>0$, with the initial condition $z(0) = \ones \mathcal
  W^x(x_0)/N$. Similarly, we denote the tracking error by~$\epsilon_z
  := z-\ones\ones^\top W^{x}/N$. Note that the variable $z$ does not
  depend on $e$, so it does not need to reinitialize at each $t_k$.
  With the new tracking variable, we reinitialize $a$ to $z$ at each
  trigger time with a jump map,
  \begin{equation}\label{eq:jump}
    a^+ = z,~t\in \{t_k\}_{k=0}^\infty.
  \end{equation}
\end{subequations}


\begin{remark}\longthmtitle{Distributed Implementation} {\rm The
    design~\eqref{Trigger:dist} does not require a central entity to
    estimate the evolution of the trigger condition, relying instead
    on dynamic average consensus.  To implement~\eqref{Trigger:dist},
    the $i$-th agent, with local exchange information on $a_i$ and
    $z_i$, can evaluate the dynamic average consensus
    dynamics~\eqref{sys:dyncon} and \eqref{sys:dyncon_init} if the
    time derivative of the reference signals $\dot W_i^{xe}$ and $\dot
    W_i^{x}$ are available to it. Each agent $i$ has the information
    of the states $x_i$ and $e_i$ and the dynamics $\dot x_i$ and
    $\dot e_i$. However, due to dependency on $x_{\NN_i}$, the
    calculation of $\dot W_i^{xe}$ and $\dot W_i^{x}$ requires
    knowledge of $x_{\NN_i}$ and $\dot x_{\NN_i}$. The computation of
    the latter requires two-hop communication in the graph
    (alternatively, only one-hop communication is required if the
    decomposition of the Lyapunov function takes the form $ V(x) =
    \sum_{i=1}^N V_i(x_{i})$).  \hfill~$\bullet$}
\end{remark}


\begin{remark}\longthmtitle{Extensions to Discrete-Time Consensus and
    Directed Graphs}\label{rmk:discrete}
  {\rm Instead of the continuous-time algorithms in~\eqref{sys:dyncon}
    and~\eqref{sys:dyncon_init}, the design~\eqref{Trigger:dist} could
    employ discrete-time implementations of the dynamic average
    consensus algorithm, see
    e.g.,~\cite{SSK-BVS-JC-RAF-KML-SM:19}. Since the effective
    timescales of~\eqref{sys:dyncon} and~\eqref{sys:dyncon_init} scale
    linearly with $\rho_a$ and $\rho_z$, respectively,
    cf. Lemma~\ref{lem:trackbound}, the stepsizes of such
    discrete-time implementations would scale linearly with $1/\rho_a$
    and $1/\rho_z$, respectively. A technical analysis analogous to
    the one presented in Section~\ref{sec:convergence-ETC} below could
    be developed, albeit we do not pursue it here for simplicity of
    exposition. A similar observation can be made about the
    interconnection structure of the network, which could easily be
    extended from undirected to weight-balanced, strongly connected
    directed graphs, cf.~\cite{FB-JC-SM:08cor}.  \hfill~$\bullet$}
\end{remark}

\subsection{Convergence Analysis}\label{sec:convergence-ETC}

In this section we show that the proposed distributed trigger
design~\eqref{Trigger:dist}, with suitable choices of the parameters
$c_\beta$, $\rho_a$, and $\rho_z$, makes the origin asymptotically
stable.  Our analysis includes establishing performance satisfaction
and a MIET. Regarding the former, from the definition of the trigger,
we have that
\begin{equation}\label{eq:dist_enforce}
  \Wc^{xe}(t)/N + \epsilon_{a,i}(t) = a_i(t) < c_\beta V(x_0)\exp(-rt)/N
\end{equation}
along the trajectory for all $i \in \until{N}$. Using the fact that
$\ones^\top\epsilon_a =0$ at all time and
summing~\eqref{eq:dist_enforce}, we deduce that $\Wc^{xe}(t)< c_\beta
V(x_0)\exp(-rt)$, i.e., the same condition enforced by the centralized
trigger~\eqref{Trigger:cent}. This shows the satisfaction of
performance. Establishing MIET is more complicated. The
inequality~\eqref{eq:dist_enforce} suggests that $\epsilon_{a,i}$
being nonzero can make the distributed trigger~\eqref{Trigger:dist}
occur prematurely in comparison to the centralized
trigger~\eqref{Trigger:cent}. However, our analysis below shows that,
by tuning different parameters appropriately, we can ensure that at
least for the time interval $[t_k,t_k +\tau^{\deriv}_\sigma)$, the
presence of $\epsilon_{a,i}$ does not have this effect, and
\eqref{Trigger:dist} is not triggered. Before establishing this fact,
we show next that the reference signals $W^{xe}$ and $W^{x}$ have an
exponentially bounded time derivative. Its proof is given in the
appendix.

\begin{lemma}\longthmtitle{Exponential Bounds for Reference
    Signals}\label{lem:signal-exp}
  Consider the distributed trigger design~\eqref{Trigger:dist} for the
  sample-and-hold nonlinear system~\eqref{sys:NL} with Lipschitz $F$
  and $\kappa$. Assume that each $V_i$ is continuously differentiable
  with Lipschitz gradient. Given a desired rate of convergence
  $r<c_\alpha/c_2$ and $\sigma\in(0,1-\frac{rc_2}{c_\alpha})$, there
  exists $\Omega^{xe}>0$ such that, for all $k\in \{0\}\cup
  \naturals$,
  \begin{align*}
    \|\dot W^{xe}(t)\| \leq \Omega^{xe}V(x_k)\exp(-r\Delta t_k)
  \end{align*}
  for $t\in[t_k,t_k+\tau^\deriv_\sigma)$.  Furthermore if $c_\beta >
  (1-\sigma)(c_\alpha/c_1)- r$, there exists $\Omega^{x}>0$ such that
    $$
  \|\dot W^{x}(t)\| \leq \Omega^{x}V(x_0)\exp(-rt)
  $$
  for all time along the trajectory.~\hfill \qed
\end{lemma}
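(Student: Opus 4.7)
The plan is to first establish a uniform polynomial-in-$(\|x\|,\|e\|)$ upper bound on both $\|\dot W^x\|$ and $\|\dot W^{xe}\|$ via chain rule and Lipschitz bookkeeping, and then substitute two different a priori estimates on $(x,e)$: one valid on the interval $[t_k,t_k+\tau^\deriv_\sigma)$ for the claim on $W^{xe}$, and one valid for all time for the claim on $W^x$. Expanding $\dot W^{xe}_i$ and $\dot W^x_i$ via the chain rule, each expression is a linear combination of components of $\dot x$ (and of $\dot e=-\dot x$ for $W^{xe}$) weighted by factors that are linear in $x$, $e$, and $\nabla V_i(x_{\mathcal N_i})$. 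Lipschitz continuity of $F$ and $\kappa$ yields $\|\dot x\|\le L_f(\|x\|+\|e\|)$, and Lipschitz continuity of $\nabla V_i$ (together with $V_i$ attaining its minimum at the origin, hence $\nabla V_i(0)=0$) yields $\|\nabla V_i(x_{\mathcal N_i})\|\le L_V\|x_{\mathcal N_i}\|$. Assembling constants gives a uniform bound of the form $\|\dot W^{xe}\|,\|\dot W^x\|\le c\,(\|x\|^2+\|e\|^2)$.

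For the first claim, on $[t_k,t_k+\tau^\deriv_\sigma)$ I would reuse the intra-interval analysis from the proof of Proposition~\ref{prop:exp}: Lemma~\ref{lem:errBnd} gives $\|e(t)\|\le\phi(t-t_k)\|x(t)\|$, and the Gronwall step~\eqref{eq:gronLyap} gives $V(x(t))\le V(x_k)\exp(\int_{t_k}^{t}\xi(s-t_k)\,ds)\le V(x_k)\exp(-r\Delta t_k)$, the last inequality using $\xi(\cdot)<-r$ on this interval. Combined with $\|x\|^2\le V(x)/c_1$, both $\|x\|^2$ and $\|e\|^2$ are majorized by a constant times $V(x_k)\exp(-r\Delta t_k)$, yielding the required $\Omega^{xe}$.

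For the second claim, I would instead use the performance inequality that the distributed trigger enforces at all times by construction: summing \eqref{eq:dist_enforce} over $i$ and using $\ones^\top\epsilon_a=0$ yields $\mathcal W^{xe}(t)<c_\beta V(x_0)\exp(-rt)$. Dropping the nonnegative $c_\gamma\|e\|^2$ term and bounding $(\sigma-1)c_\alpha\|x\|^2\ge(\sigma-1)c_\alpha V(x)/c_1$ (since $(\sigma-1)c_\alpha<0$ and $\|x\|^2\le V/c_1$) leads to $(r+c_\beta-(1-\sigma)c_\alpha/c_1)\,V(x(t))<c_\beta V(x_0)\exp(-rt)$. The hypothesis $c_\beta>(1-\sigma)c_\alpha/c_1-r$ makes the coefficient positive, so $V(x(t))\le C\,V(x_0)\exp(-rt)$ for an explicit constant~$C$; a second rearrangement of the same inequality bounds $c_\gamma\|e(t)\|^2$ by the same exponential. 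Substituting these estimates into the polynomial bound yields $\|\dot W^x(t)\|\le\Omega^x V(x_0)\exp(-rt)$.

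The main delicate point is the second claim: it invokes a global performance inequality whose validity must not presuppose the MIET that this lemma is subsequently used to establish. The correct reading is that \eqref{eq:dist_enforce} is enforced by construction whenever the trigger is monitored, independently of whether inter-trigger intervals are bounded below, so there is no circularity. All remaining work is routine Lipschitz bookkeeping and assembling the constants $\Omega^x$ and $\Omega^{xe}$.
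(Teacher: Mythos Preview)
Your proposal is correct and follows essentially the same approach as the paper: bound $\|\dot W^{xe}\|$ and $\|\dot W^x\|$ by a quadratic expression in $(\|x\|,\|e\|)$ via chain rule and Lipschitz estimates, then substitute the appropriate a priori bounds on $\|x\|^2$ and $\|e\|^2$ for each time regime. The only inconsequential differences are that, on $[t_k,t_k+\tau^\deriv_\sigma)$, the paper bounds $\|e\|^2$ directly from the derivative-based trigger inequality $(\sigma-1)c_\alpha\|x\|^2+c_\gamma\|e\|^2+rV(x)\le 0$ rather than via $\|e\|\le\phi(\Delta t_k)\|x\|$, and for the second claim the paper invokes the performance bound $V(x(t))\le V(x_0)\exp(-rt)$ directly (established just before the lemma from the same summed trigger inequality you use) instead of extracting $V\le C\,V(x_0)\exp(-rt)$ with $C>1$; your observation about non-circularity is exactly the right reading.
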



Lemma~\ref{lem:signal-exp} ensures that the requirements to apply
Lemma~\ref{lem:trackbound} hold, allowing us to bound $\epsilon_a$
and~$\epsilon_z$.  We are now ready to state the main result of this
section.

\begin{theorem}\longthmtitle{Distributed ETC
    with Exponential Performance}\label{thm:dist}
  Consider the sample-and-hold nonlinear system \eqref{sys:NL} with a
  Lyapunov function~\eqref{eq:ExpLyap}. Given a desired rate of
  convergence $r<c_\alpha/c_2$ and
  $\sigma\in(0,1-\frac{rc_2}{c_\alpha})$, let $t_{k+1}$ be determined
  iteratively according to the \pbb distributed
  trigger~\eqref{Trigger:dist} with $c_\beta >
  (1-\sigma)(c_\alpha/c_1)-r$.   Under the assumption that $F$ and
  $\kappa$ are Lipschitz and that each $V_i$ is continuously
  differentiable with Lipschitz gradient, let  
  the constant $\tau_\sigma^\deriv$ be defined as in
  Proposition~\ref{prop:exp}. Then, there exist $\rho_a$ and
  $\rho_z$ large enough such that
  \begin{align*}
    t_{k+1}-t_k\geq \tau^\deriv_\sigma ,
  \end{align*}
  for all $k\in \{0\}\cup \naturals$.  Consequently, the performance
  requirement $V(x(t))\leq V(x_0)\exp(-rt)$ is enforced for all time
  and the origin is rendered globally exponentially stable.
\end{theorem}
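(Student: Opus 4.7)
The plan is to verify the theorem in two parts. First, performance satisfaction is essentially a corollary of the centralized PBB analysis: summing the trigger inequality $a_i(t) < c_\beta V(x_0)\exp(-rt)/N$ over $i\in\until N$ and exploiting $\ones^\top\epsilon_a(t) = 0$ recovers $\Wc^{xe}(t) < c_\beta V(x_0)\exp(-rt)$, which is exactly the centralized PBB condition in~\eqref{Trigger:cent}. Invoking the Nagumo-style argument from Proposition~\ref{prop:barr-lin} and Proposition~\ref{prop:exp} then yields $V(x(t))\leq V(x_0)\exp(-rt)$, and global exponential stability follows from~\eqref{eq:ExpLyapA}. The preservation of $\ones^\top\epsilon_a = 0$ requires checking that the jump $a^+=z$ at each $t_k$ is consistent with zero mean: since $e$ resets at $t_k$, $W^{xe}(t_k^+)=W^x(t_k)$, so $\epsilon_a(t_k^+)=\epsilon_z(t_k)$, which has zero mean by the initialization of $z$.

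The MIET is the substantive part. I would argue inductively in $k$ that $t_{k+1}-t_k\geq\tau^\deriv_\sigma$ and $V(x(t_k))\leq V(x_0)\exp(-rt_k)$. Fix an interval $[t_k,t_k+\tau^\deriv_\sigma)$ and rewrite the distributed trigger condition componentwise as $\Wc^{xe}(t) + N\epsilon_{a,i}(t) \geq c_\beta V(x_0)\exp(-rt)$. It then suffices to show that the centralized margin $c_\beta V(x_0)\exp(-rt)-\Wc^{xe}(t)$ strictly dominates $N\|\epsilon_a(t)\|_\infty$ throughout this interval.

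For the margin, the analysis behind Proposition~\ref{prop:exp} shows that on $[0,\tau^\deriv_\sigma)$ the function $\xi(\tau)+r$ is nonpositive, so the left-hand side of the condition in~\eqref{eq:tau-exp} is nonpositive while the right-hand side is strictly positive; combining with the Gronwall estimate~\eqref{eq:gronLyap} and the inductive hypothesis $V(x_k)\leq V(x_0)\exp(-rt_k)$ yields a lower bound on the margin of the form $V(x_0)\exp(-rt)\cdot m(\tau)$ with $m$ a state-independent function bounded away from zero on any compact subinterval. For the tracking error, Lemma~\ref{lem:signal-exp} provides $\|\dot W^x(t)\|\leq\Omega^x V(x_0)\exp(-rt)$ globally — here is where the hypothesis $c_\beta>(1-\sigma)(c_\alpha/c_1)-r$ is used — and $\|\dot W^{xe}(t)\|\leq\Omega^{xe}V(x_k)\exp(-r\Delta t_k)$ on the inter-event interval. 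Applying Lemma~\ref{lem:trackbound} to the $z$-dynamics uniformly in $k$ bounds $\|\epsilon_z(t_k)\|$, and applying it again to the $a$-dynamics with initial condition $\epsilon_a(t_k^+)=\epsilon_z(t_k)$ gives a bound of the form $\|\epsilon_a(t)\|\leq V(x_0)\exp(-rt)\cdot\big(C_1/(\rho_a\lambda_2-r)+C_2/(\rho_z\lambda_2-r)\big)$ plus an exponentially decaying transient of matching order.

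The main obstacle I anticipate is keeping the scalings of margin and error aligned across trigger boundaries: the jump reseeds $\epsilon_a$ from $\epsilon_z$, and one needs a bound on $\epsilon_z$ that is uniform in $k$, which is only possible because the $z$-dynamics are never reset. Once both margin and error are pinned to the common envelope $V(x_0)\exp(-rt)$, their ratio becomes a state- and time-independent constant that can be made arbitrarily small by enlarging $\rho_a$ and $\rho_z$. Choosing these consensus rates large enough yields the strict inequality needed to rule out a trigger on $[t_k,t_k+\tau^\deriv_\sigma)$, closing the induction and delivering the MIET, the performance guarantee, and global exponential stability in one stroke.
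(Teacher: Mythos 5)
Your proposal is correct, and its skeleton matches the paper's proof: performance follows by summing the componentwise condition using $\ones^\top \epsilon_a=0$; the MIET follows by showing that on $[t_k,t_k+\tau^\deriv_\sigma)$ the centralized margin $c_\beta V(x_0)\exp(-rt)-\Wc^{xe}(t)$ dominates $N\|\epsilon_a\|$, with the reference-signal bounds from Lemma~\ref{lem:signal-exp}, Lemma~\ref{lem:trackbound} applied to both consensus loops, and the identification $\epsilon_a(t_k^+)=\epsilon_z(t_k)$ together with the never-reset $z$-dynamics giving a $k$-uniform seed bound. Where you genuinely differ is in how the decisive comparison is organized. The paper lower-bounds the margin by $\Omega^* V(x_k)+c_\beta\big(V(x_0)\exp(-rt_k)-V(x_k)\big)\exp(-r\tau^\deriv_\sigma)$ and then splits into two cases according to whether $\|\epsilon_a(t_k)\|$ is absorbed by the $V(x_k)$-proportional term (which fixes $\rho_a$) or by the residual-proportional term (which fixes $\rho_z$ through the manipulation in~\eqref{eq:case-4_relationship}). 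You instead substitute the inductive hypothesis $V(x_k)\leq V(x_0)\exp(-rt_k)$ directly into the Gronwall estimate --- legitimate because the coefficient $\xi(\tau)+r+c_\beta$ is nonnegative under the standing assumption on $c_\beta$ --- so that the margin is pinned to the common envelope: $c_\beta V(x_0)\exp(-rt)-\Wc^{xe}\geq V(x_0)\exp(-rt_k)\,m(\Delta t_k)$ with $m(\tau)=c_\beta\big(\exp(-r\tau)-\exp(\int_0^\tau\xi(s)ds)\big)-(\xi(\tau)+r)\exp(\int_0^\tau\xi(s)ds)$, i.e., exactly the right- minus left-hand side of the condition in~\eqref{eq:tau-exp}. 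One caution: your hedge ``bounded away from zero on any compact subinterval'' is weaker than what the argument needs; you need $\inf_{[0,\tau^\deriv_\sigma]}m>0$, since the tracking-error bound does not vanish as $\tau\to\tau^\deriv_\sigma$. This stronger fact does hold --- at $\tau=\tau^\deriv_\sigma$ the term $-(\xi+r)\exp(\int\xi)$ vanishes, but $c_\beta\big(\exp(-r\tau)-\exp(\int\xi)\big)>0$, and indeed $m>0$ all the way to $\tau^{\exp}_\sigma>\tau^\deriv_\sigma$ --- so you should state it explicitly: positivity at the endpoint is carried entirely by the performance-residual ($c_\beta$) term, which is precisely the mechanism the paper makes visible through its two-case analysis. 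With that point made, your envelope-pinning route delivers a single uniform estimate and a clean ``choose $\rho_a,\rho_z$ large'' conclusion, while the paper's route keeps the $V(x_k)$-versus-residual structure explicit, which is what yields its concrete thresholds on $\rho_a$ and $\rho_z$ and highlights the role of the residual in ruling out Zeno behavior.
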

\begin{proof}
  Our proof strategy is to show that, for each $k \in \{0\}\cup
  \naturals $, $\max_{i \in \until{N}} a_i-c_\beta
  V(x_0)\exp(-rt)/N<0$ during the time period
  $[t_k,t_k + \tau^\deriv_\sigma)$, which implies that no trigger occurs
  in said period. Note the bound
  \begin{align*}
    \max_{i\in \until N} a_i = \max_{i \in \until N} \epsilon_{a,i} +
    \Wc^{xe}/N \leq \|\epsilon_a\|+\mathcal W^{xe} /N.
  \end{align*}
  Therefore, it is enough to prove instead that
  \begin{equation}\label{eq:track-err-allowed}
    \|\epsilon_a\|+ \frac{1}{N} \big( \Wc^{xe}  -c_\beta
    V(x_0)\exp(-rt) \big)<0.
  \end{equation}
  We start bounding the second summand.  Using the bounds $\|e\|\leq
  \phi(t-t_k)\|x\|$ from Lemma~\ref{lem:errBnd} and $\|x\|^2\geq
  V(x)/c_2$ from~\eqref{eq:ExpLyapA},
  \begin{align*}
    \mathcal W^{xe} & \leq
    \left((\sigma-1)c_\alpha+c_\gamma\phi(\Delta
      t_k)^2\right)\|x\|^2+(r+c_\beta)V(x)
    \\
    & \leq \left(\frac{(\sigma-1)c_\alpha+c_\gamma\phi(\Delta
        t_k)^2}{c_2}+r+c_\beta\right)V(x)
    \\
    & = (\xi(\Delta t_k)+r+c_\beta)V(x)
  \end{align*}
  for $t\in [t_k,t_k+\tau^\deriv_\sigma]$.  Notice from the second
  inequality that with $c_\beta>(1-\sigma)(c_\alpha/c_1) -r$, the
  coefficient of $V(x)$ is positive, so we can use the upper bound of
  $V$ from \eqref{eq:gronLyap} to get
  \begin{align*}
    \mathcal W^{xe} -&c_\beta V(x_0)\exp(-rt)
    \\
    &= \mathcal W^{xe} -c_\beta V(x_k)\exp(-r\Delta t_k)
    \\
    & \quad -c_\beta(V(x_0)\exp(-rt)-V(x_k)\exp(-r\Delta t_k))
    \\
    & \leq ~(\xi(\Delta t_k)+r)V(x_k) \exp \Big(\int_{0}^{\Delta
      t_k}\xi(s)ds \Big)
    \\
    &\quad -c_\beta V(x_k) \Big(\exp(-r\Delta t_k)-\exp
    \Big(\int_{0}^{\Delta t_k}\xi(s)ds\Big)\Big)
    \\
    & \quad -c_\beta(V(x_0)\exp(-rt)-V(x_k)\exp(-r\Delta t_k)).
  \end{align*}
  Consider the first two terms in this expression. Both terms are
  strictly negative in the time interval $[t_k,t_k
  +\tau^\deriv_\sigma)$, so the maximum value of their sum must be
  negative. Therefore, there exists $\Omega^*>0$ (which can be found
  explicitly by examining its derivative and endpoints on the time
  interval $\Delta t_k\in [0,\tau^\deriv_\sigma]$ ) independent of $x_k$ such that
  \begin{align}
    &\mathcal W^{xe} -c_\beta
    V(x_0)\exp(-rt) \label{eq:2nd_summand_bound}
    \\
    &\leq -\Omega^*
    V(x_k)-c_\beta\big(V(x_0)\exp(-rt)-V(x_k)\exp(-r\Delta t_k)\big)
    \nonumber
    \\
    &= -\Omega^* V(x_k)-
    c_\beta\Big(V(x_0)\exp(-rt_k)-V(x_k)\Big)\exp(-r\Delta t_k)
    \nonumber
    \\
    &\leq -\Omega^* V(x_k)-
    c_\beta\big(V(x_0)\exp(-rt_k)-V(x_k)\big)\exp(-r
    \tau^\deriv_\sigma). \nonumber
  \end{align}
  Note here that both terms in the bound are non-positive.
  
  Regarding the first summand $\|\epsilon_a\|$
  in~\eqref{eq:track-err-allowed}, we resort to
  Lemma~\ref{lem:trackbound} to bound it.  We
  write~\eqref{eq:trackbound}, with a change of variable to shift
  time by $t_k$, for $\epsilon_a$,
  \begin{multline*}
    \|\epsilon_a(t)\| \leq
    \frac{\Omega^{xe}V(x_k)}{\rho_a\lambda_2-r} \exp(-r\Delta t_k)
    \\
    +\left(\|\epsilon_a(t_k)\| -
      \frac{\Omega^{xe}V(x_k)}{\rho\lambda_2-r}\right)\exp(-\rho
    \lambda_2 \Delta t_k).
  \end{multline*}
  Over the time interval $\Delta t_k \geq 0$, the bound either
  achieves the maximum value at $\Delta t_k=0$ or where its time
  derivative is zero on the positive interval $\Delta t_k >0$. In
  other words, $\|\epsilon_a(t) \| \leq \max
  \{\|\epsilon_a(t_k)\|,\frac{\Omega^{xe} V(x_k)}{\rho_a
    \lambda_2-r}\}$.
  We consider these two scenarios separately.

  First, consider the case where the $\|\epsilon_a(t_k)\|\leq
  \frac{\Omega^{xe} V(x_k)}{\rho_a \lambda_2-r} $. By selecting
  $\rho_a > (1/\lambda_2)(N\Omega^{xe}/\Omega^*+r)$, we can ensure
  that $\frac{\Omega^{xe} V(x_k)}{\rho_a \lambda_2-r} <
  \Omega^*V(x_k)/N$. This shows that the first term in the upper
  bound~\eqref{eq:2nd_summand_bound} is enough to dominate
  $\|\epsilon_a(t)\|$, guaranteeing that \eqref{eq:track-err-allowed}
  holds.
  
  Next, consider the case where $\|\epsilon_a(t_k)\|>\frac{\Omega^{xe}
    V(x_k)}{\rho_a \lambda_2-r}$.  Because $W^{xe}(t_k)=W^{x}(t_k)$
  holds at the update time $t_k$, we deduce from the jump
  map~\eqref{eq:jump} that $\epsilon_a(t_k) = \epsilon_z(t_k)$. Thus,
  the size of $\|\epsilon_a(t_k)\|$ directly depends on how well the
  dynamic average consensus~\eqref{sys:dyncon_init} performs, so we
  tune $\rho_z$ appropriately so that \eqref{eq:track-err-allowed}
  holds.  Particularly, we look at the possibility that
  \begin{align*}
    \|\epsilon_z(t_k)\| \geq c_\beta(V(x_0)\exp(-rt_k)-V(x_k))\exp(-r
    \tau^\deriv_\sigma)/N      
  \end{align*}
  (otherwise, the second term of the upper
  bound~\eqref{eq:2nd_summand_bound} already dominates
  $\|\epsilon_a(t)\|$).
  %
  From~\eqref{eq:trackbound}, and given the initialization of $z$ with
  $\epsilon_z(0)=0$, we have
  \begin{align*}
    \|\epsilon_z(t_k)\| \leq \frac{\Omega^x
      V(x_0)}{\rho_z\lambda_2-r}\big(\exp(-rt_k) -
\exp(-\rho_z\lambda_2t_k) \big).
  \end{align*}
   Since $\exp(-\rho_z\lambda_2t_k)\geq 0$, we obtain the relationship
  \begin{multline}\label{eq:case-4_relationship}
    \frac{\Omega^x V(x_0)}{\rho_z\lambda_2-r}\exp(-rt_k) \geq \|\epsilon_z(t_k)\|\\
    \geq
    c_\beta(V(x_0)\exp(-rt_k)-V(x_k))\exp(-r\tau^\deriv_\sigma)/N.
  \end{multline}
  After some algebraic manipulations, this implies
  \begin{align*}
    V(x_0) \exp(-rt_k) \leq
    \frac{c_\beta\exp(-r\tau^\deriv_\sigma)}{c_\beta\exp(-r\tau^\deriv_\sigma)
      - \frac{N\Omega^x}{\rho_z\lambda_2-r}} V(x_k) ,
  \end{align*}
  if the denominator of the right-hand side is positive. For this to
  be the case, we have to make sure that our choice of $\rho_z$
  satisfies $\rho_z>
  (1/\lambda_2)(\frac{N\Omega^x}{c_\beta\exp(-r\tau^\deriv_\sigma)}+r)$.
  Substituting the bound above into the upper bound
  in~\eqref{eq:case-4_relationship}, we get
  \begin{align*}
    \|\epsilon_z(t_k)\| \leq \frac{\Omega^x
      c_\beta\exp(-r\tau^\deriv_\sigma)}{c_\beta\exp(-r\tau^\deriv_\sigma)(\rho_z\lambda_2-r)
      - N\Omega^x} V(x_k).
  \end{align*}
  Now, any selection of $\rho_z$ such that
  \begin{align*}
    \rho_z > \frac{1}{\lambda_2}\left(\frac{N\Omega^x}{\Omega^*} +
      \frac{N\Omega^x}{c_\beta\exp(-r\tau^\deriv_\sigma)}+r\right),
  \end{align*}
  ensures that
  \begin{align*}
    \frac{\Omega^x
      c_\beta\exp(-r\tau^\deriv_\sigma)}{c_\beta\exp(-r\tau^\deriv_\sigma)(\rho_z\lambda_2-r)
      - N\Omega^x} < \frac{\Omega^*}{N},
  \end{align*}
  and therefore $\|\epsilon_z(t_k)\| <\Omega^*V(x_k)/N$, implying that
  the first term of the upper bound in~\eqref{eq:2nd_summand_bound}
  dominates $\|\epsilon_a(t)\|$.
  Therefore, \eqref{eq:track-err-allowed} holds for
  $t\in[t_k,t_k+\tau^\deriv_\sigma)$, and
  $\tau^\deriv_\sigma$ is a MIET for the distributed trigger
  design~\eqref{Trigger:dist}. With the existence of the MIET,
  performance satisfaction and global exponential stability follow.
\end{proof}

Theorem~\ref{thm:dist} shows that, with the appropriate tuning of the
design parameters,~\eqref{Trigger:dist} is an intrinsically Zeno-free
event-triggered design for network systems with exponential
performance (without the need to prescribe the MIET in the design as
in~\eqref{Trigger:dist-Paulo}).
This property relies critically on the \pbb design
approach, particularly on the robustness to errors provided by the
performance residual.

\begin{remark}\longthmtitle{Conservativeness in Design Parameters}
  {\rm The required bounds for the design parameters $\rho_a$ and
    $\rho_z$ developed in the proof of Theorem~\ref{thm:dist} are
    conservative and, in fact, we have observed in practice that
    values that violate these bounds also result in successful
    executions.
    Such bounds must be computed offline, a requirement that is also
    shared by the time-regularization method regarding the computation
    of the~MIET.  However, the key difference, beyond the fact that
    the method proposed here overcomes the challenges~(i)-(iii)
    described in Section~\ref{sec:challenges-ETC-network}, is that
    conservativeness in the MIET computation leads to higher actuation
    resource usage, whereas conservativeness in the bounds of
    Theorem~\ref{thm:dist} imposes requirements on the communication
    and computational resources of the agents, without affecting the
    timing of the triggers.
\hfill~$\bullet$}
\end{remark}


\section{Simulations on Vehicle Platooning}\label{sec:sims}
To illustrate the effectiveness of the \pbb trigger
design approach, we consider a vehicle platooning problem with $N=5$
vehicles driving in a line formation along a rectilinear curve.
Following~\cite{VSD-JP-WPMHH:17}, we seek to take advantage of the
inter-agent communication resources to minimize the usage in actuation
resources. The goal is to synchronize the speed $v_i$ of each vehicle
$i \in \{2,\dots,5\}$ to the leader's desired speed $v_\des$, and the
vehicle's following distance $d_i$ to a safe distance
$d_{\des,i}=d_0+T_vv_i$. Here, $d_0$ is the standstill following
distance and $T_v$ represents the factor for the additional distance
to keep with respect to the vehicle's speed. Vehicle $1$ is the leader
and measures distance with respect to a virtual reference vehicle.  We
define $\delta_i:= d_i-d_{\des,i}$ and $\nu:= v_i-v_\des$ to be the
mismatch between the actual and the desired variables. Each vehicle
uses a dynamic feedback controller to compute its control input $u_i$,
which directly affects the vehicle's acceleration~$q_i$.  The
closed-loop dynamics of the leading car, with state $x_i
= \begin{bmatrix} \delta_i & \nu_i & q_i & u_i \end{bmatrix}^\top$,
can be written as
\begin{align*}
  \dot x_1 &= \begin{bmatrix}
    0 && -1 && -T_v && 0 \\
    0 && 1 && 0 && 0 \\
    0 && 0 && -\frac{1}{T_d} && \frac{1}{T_d}\\
    \frac{k_p}{T_v} && -\frac{k_d}{T_v} && -k_d && -\frac{1}{T_v}
  \end{bmatrix} x_1 + \begin{bmatrix} 0 \\ 0 \\ \frac{1}{T_d} \\ 0
  \end{bmatrix}e_1
  \\
  &= \bar A_\diag x_1 + \bar E e_1,
\end{align*}
where $e_1=x_{1,4}(t_k)-x_{1,4}$ encodes the fact that the actual
control input $u$ is sampled at time $t_k$ and held constant until
$t_{k+1}$.  We use the system parameters $k_p =0.2$, $k_d =0.7$, $T_v
=0.6$, and $T_d = 0.1$.  Vehicles $\{2,\dots, 5\}$ have dynamics that
depend on the cars in front of them, as follows
\begin{align*}
  \dot x_i & = \bar A_\diag x_i + \bar A_\off x_{i-1} + \bar E e_i,
\end{align*}
where $e_i = x_{i,4}(t_k)-x_{i,4}$ is the sample-and-hold error and
\begin{align*}
  \bar A_\off & =
  \begin{bmatrix}
    0 && 1 && 0 && 0 \\
    0 && 0 && 0 && 0 \\
    0 && 0 && 0 && 0 \\
    0 && \frac{k_d}{T_v} && 0 && \frac{1}{T_v}
  \end{bmatrix} .
\end{align*}
We next explain how we obtain an ISS Lyapunov function. First, we
find $\Pc>0$ such that $\bar A_\diag^\top \Pc+\Pc \bar A_\diag =
-\mathbb I$ (this corresponds to ignoring the interconnection of each
following vehicle with the one in front). Next, we define
\begin{align*} 
  V(x)=\sum_{i=1}^N \pi^{N-i}x_i^\top \Pc x_i ,
\end{align*}
where $\pi$ is a weight factor to be chosen. Note that this definition
naturally places more weight to the vehicles towards the front of the
platoon. The Lie derivative of $V$ is given by
\begin{align*}
  L_fV(x,e) =&~ \sum_{i=2}^N\pi^{N-i}(-\|x_i\|^2+ 2x_i^\top \Pc\bar
  A_{\off}x_{i-1})
  \\
  &-\pi^{N-1}\|x_1\|^2 +\sum_{i=1}^N \pi^{N-i}2x^\top_i\Pc\bar E e_i .
\end{align*}
Using Young's inequality~\cite{GHH-JEL-GP:52}, we can bound the cross
terms as $2x_i^\top P\bar A_{\off}x_{i-1}\leq 5\|\Pc\bar
A_{\off}\|^2\|x_{i-1}\|^2+(1/5)\|x_{i}\|^2$.
Selecting then $\pi=31.25\|\Pc\bar A_{\off}\|^2$, we find, after some
algebraic manipulations, that
\begin{align*}
  L_fV(x,e) \leq -0.145V(x) +\sum_{i=1}^N \pi^{N-i}2x^\top_i\Pc\bar E e_i.
\end{align*}
This implies a rate of convergence of $r^*=0.145$ in the absence of
sample-and-hold error~$e$. In our simulations, we specify the desired
exponential convergence rate $r=0.08<0.75r^*$ for the triggered
implementations.

With all the elements in place, we are ready to provide a comparison
of different event-triggered control approaches.  We implement the
centralized \pbb trigger design, specifically the linear
one in~\eqref{Trigger:barr-lin}, and compare it to the
derivative-based design~\eqref{Trigger:deriv-lin}. For this, we use
\begin{align}\label{eq:g}
  g(x,e) =&~ 0.75\sum_{i=2}^N\pi^{N-i}(-\|x_i\|^2+ 2x_i^\top \Pc\bar
  A_{\off}x_{i-1}) \notag
  \\
  &-0.75\pi^{N-1}\|x_1\|^2 +\sum_{i=1}^N \pi^{N-i}2x^\top_i\Pc\bar E e_i,
\end{align}
and $c_\beta=1$. Each simulation lasts 400
seconds. Figure~\ref{fig:Lyap} shows the evolution of the Lyapunov
functions in logarithmic scale for different trigger designs and
Table~\ref{tab:results} shows the empirical MIET (which might be
larger than the actual MIET) and average number of controller updates
across 50 different trajectories with random initial conditions. As
expected, both designs satisfy the required performance.
However, it is evident from Figure~\ref{fig:Lyap} that the
derivative-based design outperforms the requirement, meaning that the
number of updates could be significantly reduced. This is precisely
what the \pbb design accomplishes by tuning the timing of
the updates to the degree of satisfaction of the prescribed
performance, reducing their number by almost 20-fold on average.

\begin{figure}[tbh]
  \centering
  \includegraphics[width=\linewidth,height=\textheight,keepaspectratio]{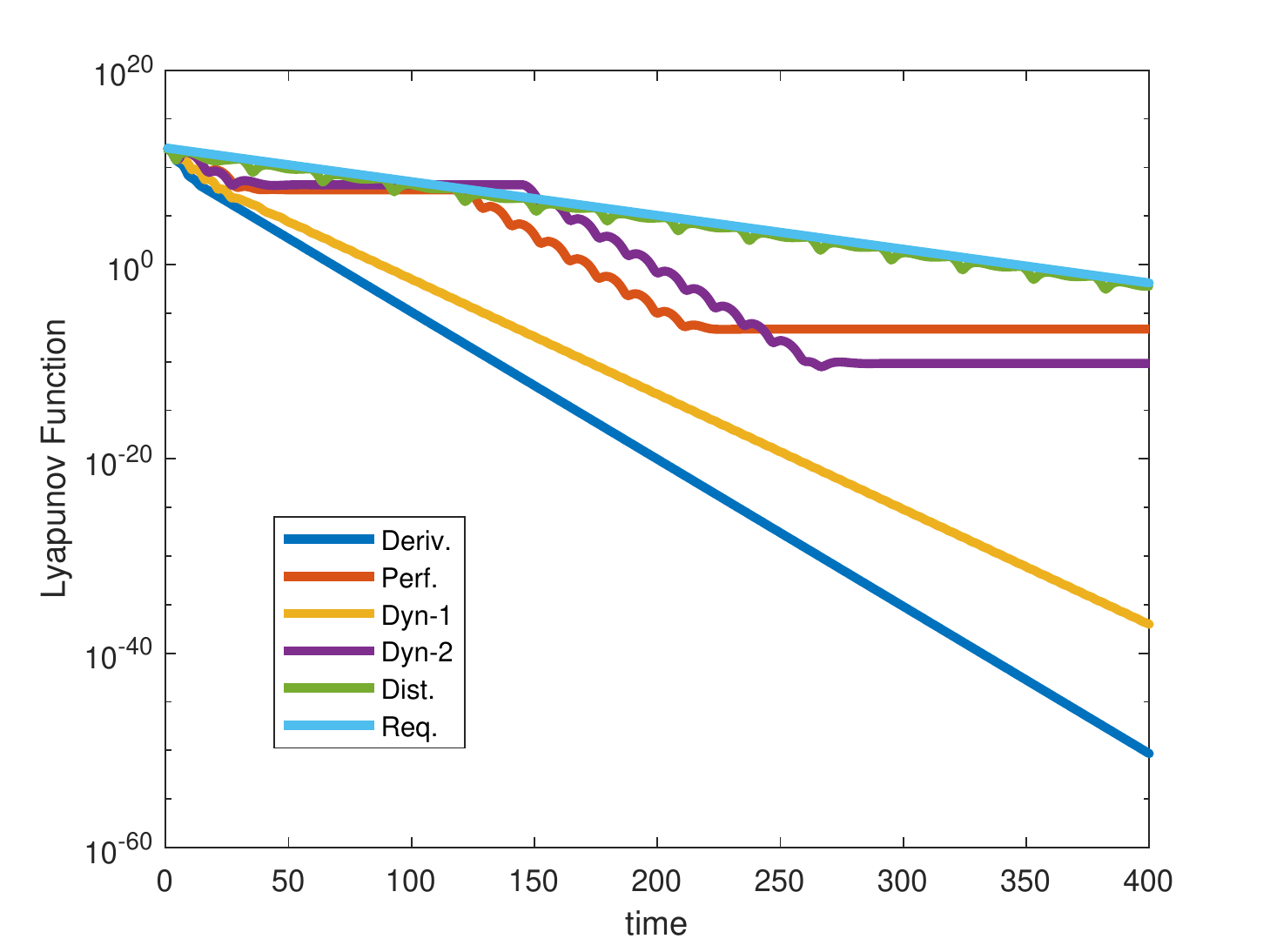}
  \caption{Evolution of the Lyapunov function for different trigger designs. }\label{fig:Lyap}
\end{figure}

\begin{table}[tbh]
  \centering
  \caption{Empirical MIET and average number of updates from 50 different random initial conditions}\label{tab:results}
  \begin{tabular}{||l|c|c||}
    \hline 
    Design & MIET (s) &  Avg. no. of updates \\
    \hline \hline
    Derivative-Based \eqref{Trigger:deriv-lin} & 0.009 & 198.28 \\
    \hline
    \PBB \eqref{Trigger:barr-lin} & 0.009 & 9.94 \\
    \hline
    Dynamic \eqref{eq:dynamic} & 0.013 & 108.48 \\
    \hline
    Dynamic \eqref{eq:dynamic} -- small decay & 0.013 & 7.18 \\
    \hline
    Distributed \eqref{Trigger:dist} & 0.003 & 96.38 \\
    \hline
  \end{tabular}
\end{table}

We also compare the proposed designs with the dynamic trigger
approach, cf.~Remark~\ref{re:dynamic-trigger}. To do so, we choose a
linear decay function $\iota (\eta) = c_\iota \eta$, and consider
different values of~$c_\iota$.  According to~\eqref{eq:dynamic}, the
degree of decay of the Lyapunov function $V$ grows with the value of
$c_\iota$, but it is not possible to determine in advance whether a
given value of $c_\iota$ will guarantee that the evolution meets the
desired performance specification. We first use $c_\iota =1$, and
observe, cf. Figure~\ref{fig:Lyap}, that the evolution of $V$ is,
similarly to that of the derivative-based design, too conservative.
Consequently, we employ $c_\iota = 0.05$, which leads to a significant
decrease in the number of updates, cf. Table~\ref{tab:results}, at the
cost of not meeting the performance specification any more,
cf. Figure~\ref{fig:Lyap}. One could go through the exercise of
fine-tuning the value of $c_\iota$ to make sure the trajectories meet
the desired performance, but this would have to be verified a
posteriori in an empirical way, rather than a priori by design, as the
\pbb approach does.

Lastly, we also report the simulation results of the distributed
trigger design~\eqref{Trigger:dist} with $\rho_a =10$ and $\rho_z=20$.
In fact, notice that both the Lyapunov function $V$ and $g$
in~\eqref{eq:g} can be expressed as the sum of functions, one per
agent, whose value can be computed by each agent with local
information,
\begin{align*}
  &V_i(x_i) = \pi^{N-i} x_i^\top\Pc x_i,\qquad W_1^{x}(x_1) =
  -0.75\pi^{N-1}\|x_1\|^2, 
  \\
  &W_i^{x}(x_{\mathcal N_i}) = 0.75\pi^{N-i}(-\|x_i\|^2+ 2x_i^\top
  \Pc\bar
  A_{\off}x_{i-1}),~\forall i\geq 2
  \\
  &W_i^{xe}(x_{\mathcal N_i},e_{i}) = W_i^{x}(x_{\mathcal N_i})+
  \pi^{N-i}2x^\top_i\Pc\bar E e_i ,~\forall i\geq 1.
\end{align*}
The distributed implementation meets the prescribed performance,
cf. Figure~\ref{fig:Lyap} and is free of Zeno behavior, as guaranteed
by Theorem~\ref{thm:dist}. This implementation triggers less often
than the centralized derivative-based approach and, as expected, more
often than the centralized \pbb design, cf.  Table~\ref{tab:results}.



\section{Conclusions}
We have developed a novel framework for event-triggered control design
that meets a prescribed performance regarding convergence.  The
proposed approach allows for greater flexibility in prescribing update
times by allowing the certificate to gradually deviate from strictly
decreasing in proportion to the performance residual.  We have shown
analytically how, for exponential performance specifications, the
resulting trigger design exhibits an improved MIET with respect to the
derivative-based approach.  We have taken advantage of the flexibility
of the proposed approach to design intrisically Zeno-free triggers for
network systems that rely on distributed computation and communication
and are applicable for a general class of systems.  Future work will
seek to generalize the guarantees on an improved MIET with respect to
the derivative-based approach and the distributed trigger design for
network systems beyond exponential performance specifications.  We
also plan to explore the extension of the \pbb trigger
design framework to deal with Zeno-free output feedback stabilization,
handle actuation delays, and cope with scenarios where triggers cannot
be evaluated continuously.


\appendix
\begin{proof}[Proof of Lemma~\ref{lem:trackbound}]
  We begin the proof by writing the dynamics of the tracking error,
  \begin{align*}
    \dot \epsilon &= \dot y - \ones\ones^\top \dot W /N
    \\
    &= \dot W - \rho \mathbf{L} (\epsilon-\ones\ones^\top W/N) -
    \ones\ones^\top \dot W/n
    \\
    &= -\rho \mathbf{L} \epsilon + (\mathbf{I}-\ones\ones^\top/N)\dot
    W
  \end{align*}
  where we have used the fact that $\mathbf{L}\ones =0$. Note also
  that $\ones^\top \dot \epsilon = 0$, so $\ones^\top \epsilon = 0$ by
  construction. Hence, at all time, there is no component of
  $\epsilon$ along the eigenvector $\ones$ associated with the
  eigenvalue $0$ of the Laplacian matrix $\mathbf{L}$. Consequently,
  we can bound
  \begin{align*}
    \frac{d}{dt} \|\epsilon\|^2 &= -\rho\epsilon^\top
    (\mathbf{L}+\mathbf{L}^\top) \epsilon +2\epsilon ^\top
    (\mathbf{I}-\ones\ones^\top/N)\dot W
    \\
    &\leq -2\rho \lambda_2 \|\epsilon\|^2 +
    2\|\epsilon\|\|\mathbf{I}-\ones\ones^\top/N\|\|\dot W\|
    \\
    &\leq -2\rho \lambda_2 \|\epsilon\|^2 + 2c_{\dot
      W}\exp(-rt)\|\epsilon\|
  \end{align*}
  for time $t\in[t_k,t_{k+1})$.  It can be verified through
  substitution that the solution
  \begin{multline*}
    v = \frac{c_{\dot W}}{\rho\lambda_2-r} \exp(-rt)+\left(v(0) -
      \frac{c_{\dot W}}{\rho\lambda_2-r}\right)\exp(-\rho \lambda_2
    t).
  \end{multline*}
  satisfies the Bernoulli differential equation~\cite{ELI:56}
  $$
  v\frac{dv}{dt} = -\rho \lambda_2 v^2 + c_{\dot W} \exp(-rt)v.
  $$ 
  Note that when $v\neq 0$, this reduces to
  $$
  \frac{dv}{dt} = -\rho \lambda_2 v + c_{\dot W} \exp(-rt),
  $$ 
  which is linear, with the right-hand side locally Lipschitz in~$v$.
  Then, with $v(0)=\|\epsilon(0)\|\neq 0$, we can deduce
  $\|\epsilon\|^2 \leq v^2$ by applying the Comparison
  Lemma~\cite[Lemma 3.4]{HKK:02}.  Whenever $\|\epsilon\|=0$, it is
  possible (depending on $\dot W$) for $\epsilon$ to remain zero for
  some time interval. On such interval, the Comparison Lemma does not
  apply; however, the case is trivial, and the bound $\|e\|^2 \leq
  v^2$ still holds.  Finally, by noting that $v \geq 0$ because
  $v(0)\geq 0$, we obtain $\|\epsilon\| \leq \vert v \vert = v$ as
  stated.
  %
\end{proof}

\begin{proof}[Proof of Lemma~\ref{lem:signal-exp}]
  Note that, since $F$ and $\kappa$ are Lipschitz, then $f$ is
  Lipschitz too.  Consider the column vector composed of
  $\{V_i\}_{i=1}^N$ and let $J_V(x)$ be its Jacobian.  Then, because
  each $V_i$ have Lipschitz gradients, there exist constants $L_{dV}$
  and $L_f$ on the compact sublevel set $\setdefb{x}{V(x)\leq V(x_0)}$
  such that
  \begin{align}\label{eq:dotWxe-bound}
    \|\dot W^{xe}\| &= \|\big((\sigma-1)c_\alpha x^\top -c_\gamma
    e^\top + (r+c_\beta) J_V(x)\big)f(x,e)\| \notag
    \\
    &\leq  \big((1-\sigma)c_\alpha \|x\| +c_\gamma \|e\| + (r+c_\beta)
    L_{dV}\|x\|\big) \notag
    \\
    &\qquad \times L_f(\|x\|+\|e\|) .
  \end{align}
  We next bound the quadratic terms $\|x\|^2$, $\|e\|^2$ and
  $\|x\|\|e\|$ in terms of $V(x_k)\exp(-r\Delta t_k)$ for the duration
  of the interval $[t_k,t_k +\tau^\deriv_\sigma)$. First, knowing that
  $V(x)\leq V(x_k)\exp(-r\Delta t_k)$ over the interval, we can
  immediately bound $\|x\|^2\leq V(x_k)\exp(-r\Delta t_k)/c_1$. Next,
  for $\|e\|^2$, recall that $\tau^\deriv_\sigma$ is the minimum
  inter-event time for the derivative-based design, and we can
  therefore bound
  \begin{align*}
    \|e\|^2 &\leq
    (1/c_\gamma)\big((1-\sigma)c_\alpha\|x\|^2-rV(x)\big)
    \\
    &\leq (1/c_\gamma)((1-\sigma)c_\alpha/c_1-r)V(x) 
    \\
    &\leq (1/c_\gamma)((1-\sigma)c_\alpha/c_1-r)V(x_k)\exp(-r\Delta
    t_k) ,
  \end{align*}
  for $t \in [t_k,t_k +\tau^\deriv_\sigma)$. Finally, it follows that
  \begin{align*}
    \|x\|\|e\| \leq
    \sqrt{\frac{(1-\sigma)c_\alpha/c_1-r}{c_1c_\gamma}}V(x_k)\exp(-r\Delta
    t_k), 
  \end{align*}
  for $t \in [t_k,t_k +\tau^\deriv_\sigma)$. Substituting the bounds
  back into~\eqref{eq:dotWxe-bound} leads to the identification
  of~$\Omega^{xe}>0$, proving the claim for~$\|\dot W^{xe}\|$.
  
  For the bound of $\|\dot W^{x}\|$, we consider the entire time
  interval $t \in [t_k,t_{k+1})$.  Using the performance satisfaction,
  we bound
  \begin{align*}
    \|x\|^2\leq V(x)/c_1\leq V(x_0)\exp(-rt)/c_1 .
  \end{align*}
  From the trigger condition and $c_\beta > (1-\sigma)(c_\alpha/c_1)-
  r$,
  \begin{align*}
    c_\gamma \|e\|^2 &\leq c_\beta
    V(x_0)\exp(-rt)-(r+c_\beta+(\sigma-1)\frac{c_\alpha}{c_1})V(x)
    \\
    &\leq c_\beta V(x_0)\exp(-rt)
  \end{align*}
  The result now follows using the same line of reasoning as in the
  proof of the bound for $\| \dot W^{xe}\|$ to conclude the existence
  of~$\Omega^{x}>0$ as stated.
\end{proof}

For the sake of completeness, we state the following result on the
sample-and-hold error bound. 

\begin{lemma}\longthmtitle{Sample-and-Hold Error Bound~\cite[Thm
    III.1]{PT:07}}\label{lem:errBnd}
  Consider the sample-and-hold nonlinear system~\eqref{sys:NL}. If the
  functions $f$ is Lipschitz with a constant $L_f$, then for $t \in
  [t_k,t_k+1/L_f)$, the state deviation is bounded as
  $$
  \|e\| \leq \phi(t-t_k)\|x\|
  $$
  where $\phi(\tau)=\frac{L_f \tau}{1-L_f \tau}$.~\hfill$\blacksquare$
\end{lemma}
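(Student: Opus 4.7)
The plan is to derive a scalar differential inequality for the ratio $\eta(t):=\|e(t)\|/\|x(t)\|$ and then invoke the Comparison Lemma against the explicit solution of the resulting Riccati-type ODE, whose blow-up time is exactly $1/L_f$.

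First, I would exploit that during each inter-event interval $x_k$ is constant, so $\dot e = -\dot x = -f(x,e)$. Using $f(0,0)=0$ (the origin is an equilibrium of the continuous closed loop) together with the Lipschitz property of $f$ in the joint argument $(x,e)$, this yields the pointwise bound
\begin{equation*}
\|\dot e\| = \|\dot x\| = \|f(x,e)\| \le L_f\bigl(\|x\|+\|e\|\bigr).
\end{equation*}
Next, applying the standard inequalities $\frac{d}{dt}\|v\|\le \|\dot v\|$ and $\frac{d}{dt}\|v\|\ge -\|\dot v\|$ (valid wherever $\|v\|\neq 0$), I would differentiate the ratio:
\begin{equation*}
\dot\eta \;\le\; \frac{\|\dot e\|}{\|x\|} + \frac{\|e\|\,\|\dot x\|}{\|x\|^{2}}
\;=\; \frac{\|f(x,e)\|}{\|x\|}\,(1+\eta)
\;\le\; L_f\,(1+\eta)^{2}.
\end{equation*}
Since $e(t_k)=0$, the initial condition is $\eta(t_k)=0$.

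The comparison equation $\dot z = L_f(1+z)^{2}$ with $z(t_k)=0$ separates, giving the explicit solution $z(t)=\frac{L_f(t-t_k)}{1-L_f(t-t_k)} = \phi(t-t_k)$, which is well-defined and finite precisely on $[t_k,t_k+1/L_f)$. The Comparison Lemma \cite[Lemma 3.4]{HKK:02} then delivers $\eta(t)\le \phi(t-t_k)$ on that interval, which is the desired bound.

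The main obstacle is the potential degeneracy $\|x(t)\|=0$ at isolated instants, where $\eta$ is undefined and the inequalities $\frac{d}{dt}\|x\|\ge -\|\dot x\|$ need Dini-derivative justification. I would handle this by working instead with the squared ratio $\eta^{2}=\|e\|^{2}/\|x\|^{2}$ (so no absolute values are differentiated), or, equivalently, noting that on any maximal open subinterval where $\|x\|>0$ the above derivation is rigorous, while if $\|x\|=0$ on a set of positive measure then $\dot x\equiv 0$ there forces $f(x,e)=0$ and hence $\dot e\equiv 0$, so $e$ stays at its value; combined with $e(t_k)=0$ and continuity, the bound $\|e\|\le \phi(t-t_k)\|x\|$ extends by continuity through any isolated zero of $\|x\|$. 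Everything else is routine manipulation of the closed-form solution to the Riccati-type comparison ODE.
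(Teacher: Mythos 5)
Your proposal is correct and is essentially the standard argument behind this lemma: the paper itself omits the proof and simply cites~\cite[Thm III.1]{PT:07}, where the result is established exactly by bounding $\frac{d}{dt}\bigl(\|e\|/\|x\|\bigr)\le L_f(1+\|e\|/\|x\|)^2$ and comparing against the Riccati solution $\phi(\tau)=\frac{L_f\tau}{1-L_f\tau}$. Your explicit attention to the implicit hypothesis $f(0,0)=0$ and to the degeneracy at $\|x\|=0$ is a welcome tightening of the usual presentation.
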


\bibliographystyle{ieeetr}

\vspace*{-7ex}

\begin{IEEEbiography}[{\includegraphics[width=1in,height=1.25in,clip,keepaspectratio]{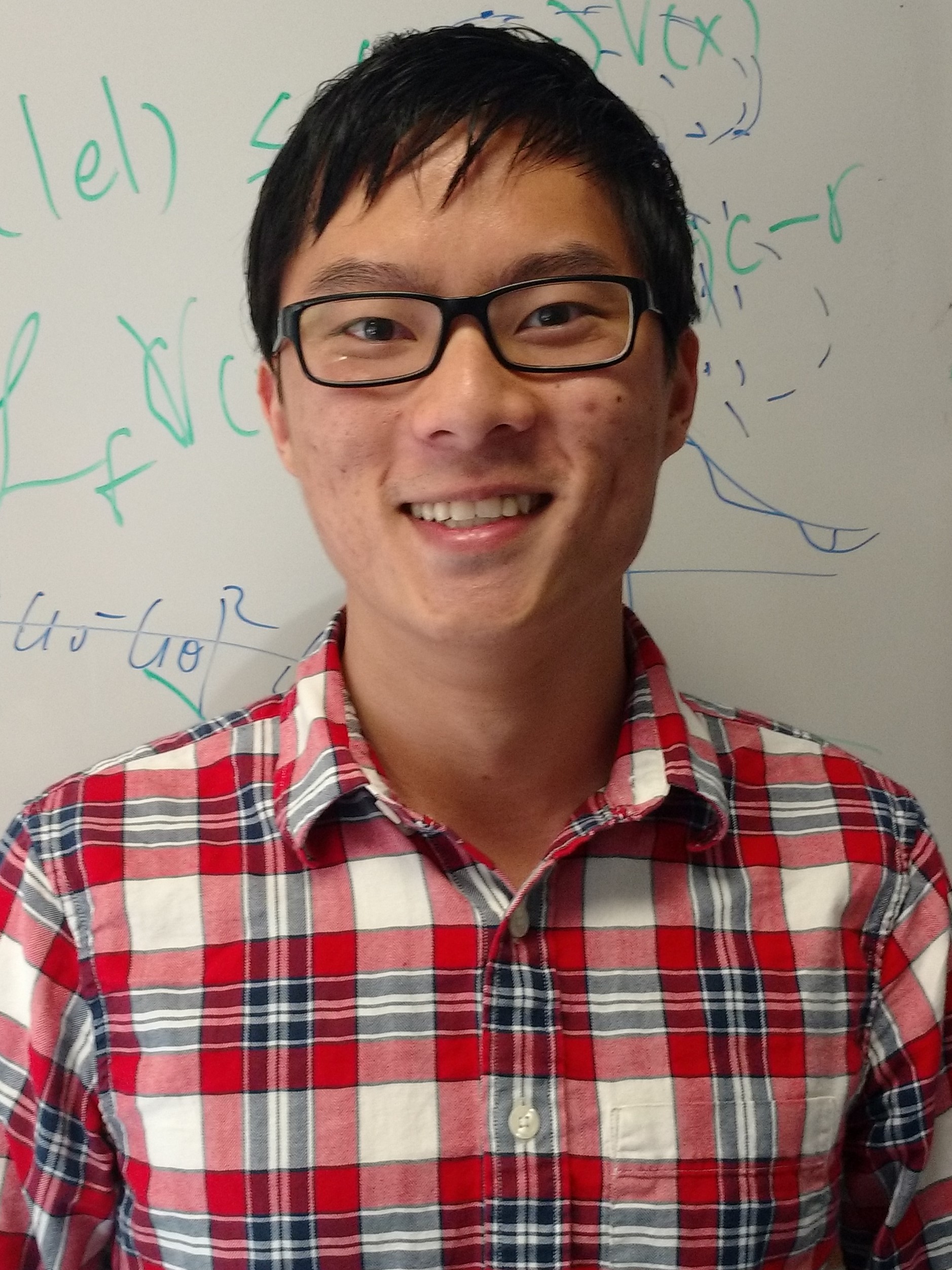}}]{Pio
    Ong}
  received a Bachelor's degree in Aerospace Engineering from
  University of California, San Diego in 2012, and a Master's degree
  in Astronautical Engineering from University of Southern California
  in 2013. During Fall and Winter of 2014, he worked at Space
  Exploration Technologies Corp (SpaceX).  Currently, he is a
  Ph.D. student in the Department of Mechanical and Aerospace
  Engineering at the University of California, San Diego, working as a
  research assistant and occasionally as a teaching assistant under
  the supervision of Professor Jorge Cort\'{e}s. His current research
  interests include multiobjective optimization, event-triggered
  control, human-robot interaction, and safety critical control.
\end{IEEEbiography}

\vspace*{-7ex}

\begin{IEEEbiography}[{\includegraphics[width=1in,height=1.25in,clip,keepaspectratio]{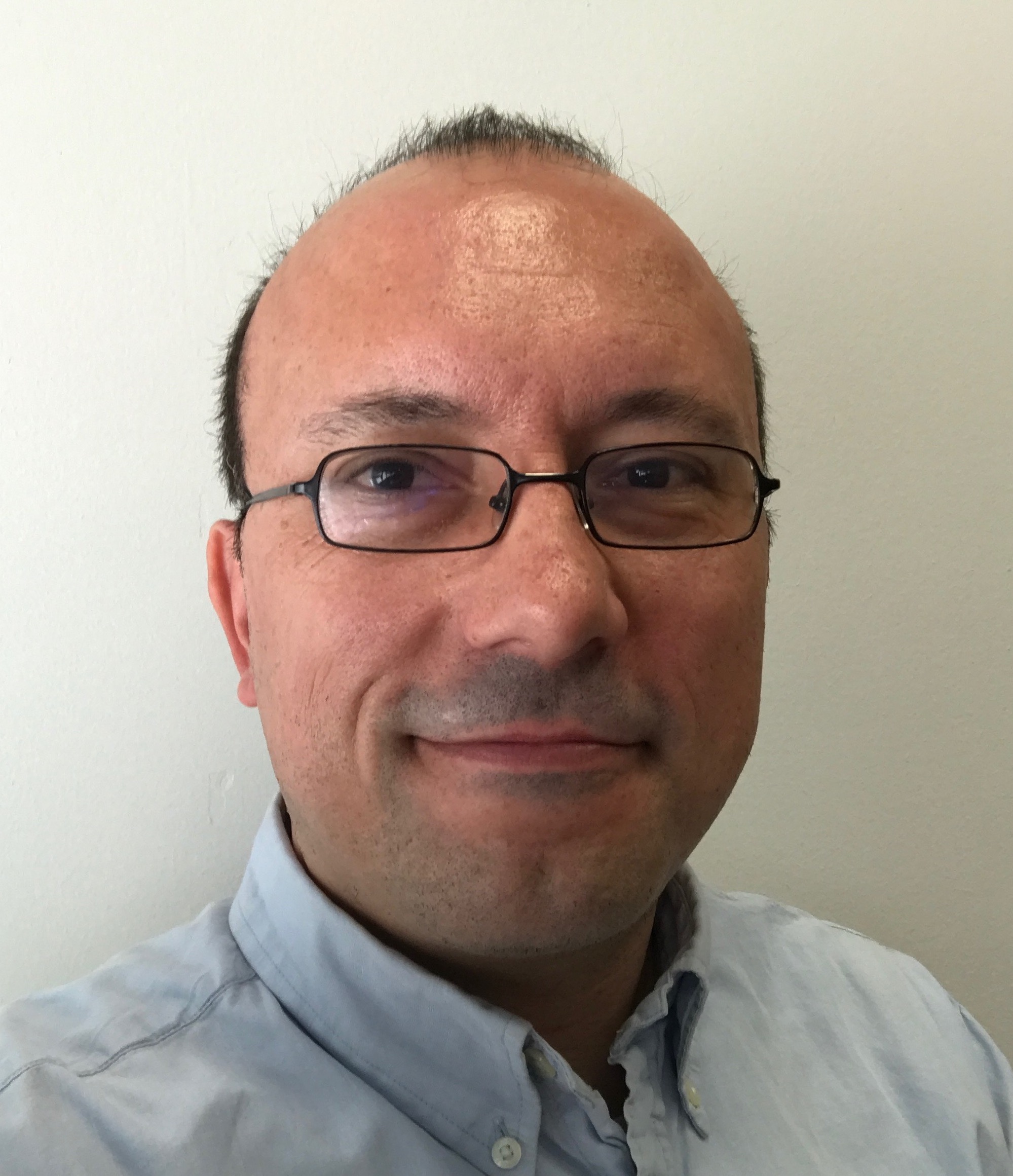}}]{Jorge
    Cort\'{e}s}
  (M'02, SM'06, F'14) received the Licenciatura degree in mathematics
  from Universidad de Zaragoza, Zaragoza, Spain, in 1997, and the
  Ph.D. degree in engineering mathematics from Universidad Carlos III
  de Madrid, Madrid, Spain, in 2001. He held postdoctoral positions
  with the University of Twente, The Netherlands, and the University
  of Illinois at Urbana-Champaign, USA. He was an Assistant Professor
  with the Department of Applied Mathematics and Statistics,
  University of California, Santa Cruz, USA, from 2004 to 2007. He is
  currently a Professor in the Department of Mechanical and Aerospace
  Engineering, University of California, San Diego, USA.  
  He is a Fellow of IEEE
  and SIAM.
  His current research interests include distributed control and
  optimization, network science, nonsmooth analysis, reasoning and
  decision making under uncertainty, network neuroscience, and
  multi-agent coordination in robotic, power, and transportation
  networks.
\end{IEEEbiography}

\end{document}